\documentclass[11pt]{amsart}

\usepackage[T1]{fontenc}
\usepackage{lmodern}
\usepackage[margin=1in]{geometry}
\usepackage{amsmath,amssymb,amsthm,mathtools}
\usepackage{aliascnt}
\usepackage{booktabs}
\usepackage{array,tabularx}
\usepackage{enumitem}
\usepackage{needspace}
\usepackage{tikz}
\usetikzlibrary{arrows.meta,calc,positioning}
\usepackage[nocompress]{cite}
\usepackage[hidelinks]{hyperref}
\usepackage[nameinlink,noabbrev]{cleveref}
\usepackage{microtype}

\newtheorem{theorem}{Theorem}[section]
\newaliascnt{lemma}{theorem}
\newtheorem{lemma}[lemma]{Lemma}
\aliascntresetthe{lemma}
\newaliascnt{proposition}{theorem}
\newtheorem{proposition}[proposition]{Proposition}
\aliascntresetthe{proposition}
\newaliascnt{corollary}{theorem}

\aliascntresetthe{corollary}
\newaliascnt{remark}{theorem}
\newtheorem{remark}[remark]{Remark}
\aliascntresetthe{remark}
\newaliascnt{definition}{theorem}
\newtheorem{definition}[definition]{Definition}
\aliascntresetthe{definition}

\crefname{lemma}{lemma}{lemmas}
\Crefname{lemma}{Lemma}{Lemmas}
\crefname{proposition}{proposition}{propositions}
\Crefname{proposition}{Proposition}{Propositions}
\crefname{corollary}{corollary}{corollaries}
\Crefname{corollary}{Corollary}{Corollaries}
\crefname{remark}{remark}{remarks}
\Crefname{remark}{Remark}{Remarks}
\crefname{definition}{definition}{definitions}
\Crefname{definition}{Definition}{Definitions}

\newcommand{\R}{\mathbb R}
\newcommand{\T}{\mathcal T}
\newcommand{\Q}{\mathbb Q}
\newcommand{\diver}{\operatorname{div}}

\newcommand{\supp}{\operatorname{supp}}
\newcommand{\mean}{\mathsf M}

\newcommand{\one}{\boldsymbol 1}

\newcommand{\norm}[1]{\lVert #1\rVert}

\title[Scott--Vogelius Stability on 3D Freudenthal Meshes]
{Uniform Stability of Scott--Vogelius Elements on Three-Dimensional Freudenthal Meshes in Degrees Four and Five: Resolving the Farrell--Mitchell--Scott Conjecture}

\newcommand{\ManuscriptAuthorNames}{Hanbing Liang and Fujun Liu}
\newcommand{\SharedAffiliation}{Nanophotonics and Biophotonics Key Laboratory
of Jilin Province, School of Physics, Changchun University of Science and
Technology, Changchun 130022, China}

\author{Hanbing Liang and Fujun Liu}
\address{\SharedAffiliation}
\email{fjliu@cust.edu.cn}
\thanks{Corresponding author: Fujun Liu.}
\subjclass[2020]{Primary 65N30; Secondary 65N12, 76D07}
\keywords{Scott--Vogelius finite elements, inf--sup stability,
Freudenthal triangulations, divergence-free finite elements,
macroelement methods}
\hypersetup{
  pdftitle={Uniform Scott--Vogelius Stability on Three-Dimensional
    Freudenthal Meshes in Degrees Four and Five},
  pdfauthor={\ManuscriptAuthorNames},
  pdfsubject={Uniform stability of the three-dimensional Scott--Vogelius
    finite element on Freudenthal meshes},
  pdfkeywords={Scott--Vogelius finite elements, inf--sup stability,
    Freudenthal triangulations, divergence-free finite elements,
    macroelement methods}
}
\date{}

\begin{document}
\raggedbottom
\begin{abstract}
We establish a uniform inf-sup stability estimate for the Scott-Vogelius finite element spaces on uniform Freudenthal tetrahedralizations of the unit cube for polynomial degrees $k \geq 4$. This result completely settles the first conjecture of Farrell, Mitchell, and Scott for the critical degrees $k=4$ and $k=5$, complementing the known stability range for higher polynomial degrees. The main mathematical difficulties stem from the complex topological compatibility required at the singular vertices and the corresponding mean-value constraints across adjacent elements. We tackle these challenges by developing a unified barycentric skeleton-bubble calculus that explicitly constructs vertex jets, edge modes, and face transfers to globally route element means. The accompanying exact computations independently verify these finite-dimensional identities and provide reproducibility data.

\end{abstract}

\maketitle

\section{Introduction}\label{sec:introduction}

The exact preservation of the divergence-free constraint is of fundamental importance in the stable numerical simulation of incompressible flows. The significance of this exact constraint for incompressible flow discretizations, which eliminates pressure-driven non-physical artifacts, is extensively surveyed in \cite{JohnEtAl2017}. The finite element spaces analyzed in this paper belong to the Scott and Vogelius family introduced in \cite{ScottVogelius1985Continua}. The image space definition inherently incorporates the mesh dependent compatibility conditions for the discrete divergence directly into the pressure space. The corresponding range characterization and bounded right inverse problem were originally developed in \cite{Vogelius1983,ScottVogelius1985RightInverse}. For the modern two dimensional theory in degree four and above, we refer the reader to \cite{GuzmanScott2019}. Related conforming and exactly divergence-free constructions in three dimensions include those presented in \cite{Zhang2005,GuzmanNeilan2014,GuzmanNeilan2018,NeilanSap2016}. Their Stokes complex structure and pointwise mass conservation are discussed in \cite{FalkNeilan2013,Neilan2020}. A dimension based comparison of three dimensional exactly divergence-free spaces is given in \cite{ScottTscherpel2024}. 

To establish our theoretical framework, we put $\Omega=(0,1)^3$. For a given integer $N$, we divide $\Omega$ into $N^3$ closed cubes of side $h=N^{-1}$. If a cube is denoted by $C=x_C+h[0,1]^3$, its six Freudenthal tetrahedra are \[
x_C+h\,\operatorname{conv}
\{0,e_{\sigma(1)},e_{\sigma(1)}+e_{\sigma(2)},(1,1,1)\},
\qquad \sigma\in S_3
\]. The same orientation is used in every cube. The resulting conforming tetrahedral mesh is denoted by $\T_h$. This is the standard Freudenthal subdivision detailed in \cite{Freudenthal1942,Bey2000}. It is shape regular, and every element is a translation, scaling, and coordinate permutation of one of finitely many reference tetrahedra. Consequently, the affine scaling, trace, and inverse estimates used below have constants independent of $h$, as established in \cite{BrennerScott2008}.

For an integer $k\geq1$, we define the velocity and pressure spaces as
\begin{equation}\label{eq:spaces}
 V_{h,k}=\{v\in H^1_0(\Omega;\R^3):v|_T\in P_k(T)^3 \text{ for every }T\in\T_h\}, \qquad Q_{h,k}=\diver V_{h,k}.
\end{equation}
The pressure space is generally a proper subspace of the broken polynomial space \(P_{k-1}(\T_h)\). All constructions below are carried out on this divergence image. We denote the standard gradient norm by $|v|_1=\norm{\nabla v}_{L^2(\Omega)}$ and the pressure norm by $\norm q=\norm q_{L^2(\Omega)}$. Because the velocity field has a zero trace, the gradient seminorm is a valid norm on $V_{h,k}$. Furthermore, the divergence is bounded by this norm, satisfying
\begin{equation}\label{eq:div-bound}
 \norm{\diver v}\leq\sqrt3\,|v|_1.
\end{equation}

For $k\geq6$, Zhang's Theorem 3.1 in \cite[pp.~691-692, (3.69)]{Zhang2011} proves the required uniform inf-sup estimate for the same domain, homogeneous boundary condition, and uniformly oriented mesh. His definitions in \cite[(2.3)-(2.4)]{Zhang2011} correspond exactly to our $V_{h,k}$ and $Q_{h,k}$. The restriction of the divergence operator to the orthogonal complement of its kernel is a bijection onto $Q_{h,k}$, and the inf-sup estimate bounds its inverse uniformly. We prove the stability for the critical lower degrees $k=4$ and $k=5$ below. These are precisely the cases left open in Conjecture 1 of Farrell, Mitchell, and Scott \cite[Conjecture~1]{FarrellMitchellScott2024}.

\begin{theorem}[Main theorem]\label{thm:main}
For every fixed integer $k\geq4$, there exist a constant $C_k$, independent of $N$, and a linear operator $R_{h,k}:Q_{h,k}\longrightarrow V_{h,k}$ such that
\begin{equation}\label{eq:right-inverse}
 \diver R_{h,k}q=q,\qquad |R_{h,k}q|_1\leq C_k\norm q.
\end{equation}
Consequently the reduced Scott and Vogelius inf-sup constant satisfies
\begin{equation}\label{eq:inf-sup}
 \inf_{0\ne q\in Q_{h,k}}\ \sup_{0\ne v\in V_{h,k}} \frac{(\diver v,q)_{L^2}}{|v|_1\norm q}\geq C_k^{-1}
\end{equation}
uniformly in $h$.
\end{theorem}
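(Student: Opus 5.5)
For $k\geq6$ the statement is Zhang's theorem \cite[Theorem~3.1]{Zhang2011}, so I treat only $k=4$ and $k=5$. The consequence \eqref{eq:inf-sup} follows from \eqref{eq:right-inverse}: for $q\in Q_{h,k}$ take $v=R_{h,k}q$, so that $(\diver v,q)=\norm q^2\geq C_k^{-1}|v|_1\norm q$. It therefore suffices to build $R_{h,k}$. I would use the standard two-stage macroelement decomposition. Write $q=\bar q+(q-\bar q)$, where $\bar q$ is the piecewise-constant $L^2$ projection of $q$ onto cube means (or vertex-patch means). This gives a coarse part and a locally mean-zero fine part, each treated separately with constants that depend only on the finitely many reference configurations.

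\textbf{Step 1 (coarse part).} The cube-wise constants $\bar q$ have global mean zero, since $q=\diver v$ with $v\in H^1_0$. The continuous Bogovskii operator, or equivalently a discrete $Q_1$--$P_0$-type argument on the cube mesh, gives $w\in H^1_0$ with $\diver w=\bar q$ and $|w|_1\lesssim\norm{\bar q}$. I would realize $w$ discretely through face bubbles of degree $\le 4$: each interior cube face carries a normal-flux mode whose divergence transfers mass between the two adjacent cubes. A discrete Poisson/graph-Laplacian solve on the cube adjacency graph then routes the means. The resulting bound is uniform because the discrete Laplacian on the uniform cube graph satisfies a uniform Poincaré inequality at the $H^1$ scaling.

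\textbf{Step 2 (fine part).} On each macroelement (a cube, or a vertex star), the remainder $q-\bar q$ is mean-zero and lies in the local restriction of $Q_{h,k}$. I need a local right inverse into velocities supported in the macroelement and vanishing on its boundary, with norm bounded by a constant. By translation, scaling and the finitely many reference macroelements, this reduces to a finite-dimensional surjectivity statement: $\diver$ maps $P_k$ velocities vanishing on the macro boundary onto the local pressure image modulo constants. Scaling then gives the $h$-independent bound, and summing over macroelements with bounded overlap gives the global estimate.

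\textbf{Main obstacle.} The difficulty lies in Step 2 at $k=4,5$. The local image space is cut down by the Scott--Vogelius singular-vertex constraints. These are the alternating-sum conditions at vertices where the surrounding faces are coplanar in pairs, which occur on Freudenthal stars, and in 3D there are also edge constraints. The macro-local divergence of boundary-vanishing velocities may then fail to reach the full $Q_{h,k}$-restriction, because part of $Q_{h,k}$ arises only from velocities whose support crosses macro boundaries. To handle this I would classify the vertex and edge configurations of the Freudenthal mesh (finitely many up to symmetry). For each configuration I would construct explicit barycentric bubbles: vertex jets, edge modes times $\lambda_i\lambda_j$, and face bubbles $\lambda_i\lambda_j\lambda_l$ times low-degree polynomials. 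These must span the complement of the interior-bubble divergences. I would verify the required rank by exact rational linear algebra on the reference stars, giving an overlapping macro decomposition (vertex stars plus edge patches) in which every element of $Q_{h,k}$ splits into locally representable pieces. The delicate point, and the one I expect to fail first if anything does, is that the constraint-compatible splitting of $q$ across overlapping patches must itself be bounded. I would secure this by choosing it via a fixed linear map on reference data, i.e.\ via partition-of-unity-weighted local projections that preserve the singular-vertex constraints.
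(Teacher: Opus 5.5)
Your reduction to $k=4,5$ and the passage from \eqref{eq:right-inverse} to \eqref{eq:inf-sup} are fine. The local ingredients you list (vertex jets, edge modes, face bubbles, exact rank checks) are also the right ones. The gap is in how they are assembled.

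Step 2 as stated is false, as you note yourself. Your remedy, a constraint-compatible splitting of $q$ via partition-of-unity-weighted local projections, is essentially the content of the theorem, and you leave it unproved. Multiplying $q$ by a partition of unity leaves the broken $P_{k-1}$ space. It also breaks the pointwise relations that define $Q_{h,k}$: the vertex relations, the equal-pair and checkerboard relations along face diagonals, and the zero relation on one-tetrahedron box edges. Nothing in your plan shows that the resulting pieces lie in the images of your supported lifts, let alone with zero macro means.

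The idea you are missing is sequential residual elimination along the skeleton:
\begin{enumerate}[label=(\roman*)]
\item The paper first makes every tetrahedron mean zero.
\item It then removes all broken vertex values, vertex by vertex, with star-supported cubics $G_ad_a$. These have zero element means and zero divergence at every other vertex. This works because the vertex data of \emph{every} element of $Q_{h,k}$ lie in $\operatorname{im}A_a$ (the edge-jet characterization), and this image is attained by such supported fields.
\item Next it removes edge-node values edge by edge, on the local source space $\mathcal D_{e,k}$, with patch-supported face bubbles.
\item Finally it closes the interior with $D_{k-4}^{-1}$.
\end{enumerate}
Each residual is a difference of divergences, so it stays in $Q_{h,k}$. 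Local compatibility is therefore inherited rather than engineered, and boundedness of the splitting reduces to an inverse estimate for nodal values.

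Two further ingredients are absent from your plan.

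First, you have no trace-free mean transfer. Since $\mathcal E_0(T)$ consists of mean-zero cubics, in degree four every tetrahedron mean (not just every cube mean) must vanish before the bubble stage. The edge lifts destroy these means, and re-routing them requires fields whose divergence vanishes on every edge. The paper supplies this with the quartic two-cube macro: eleven edge-trace-free fields whose means span $\one^\perp$ ($\det A=-6$), combined with a bounded-cluster routing.

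Second, Step 1 does not work as written, for three reasons.
\begin{enumerate}[label=(\alph*)]
\item $\bar q$ is generally not in $Q_{h,k}$. It must vanish on every cube containing a one-tetrahedron box edge, because the divergence of every $V_{h,k}$ field vanishes along such an edge. So $\diver w=\bar q$ cannot hold with $w\in V_{h,k}$, and the fine part must be $q-\diver w$, not $q-\bar q$.
\item $Q_1$--$P_0$ is not inf-sup stable, so it is not an equivalent route.
\item Routing means through face bubbles alone via a graph-Laplacian solve loses a factor $h^{-1}$. A unit-flux face bubble has $|\cdot|_1^2\sim h^{-3}$, while a smooth $O(1)$ coarse pressure needs fluxes of size $h^2$ on $O(h^{-3})$ faces. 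This gives $|w|_1^2\sim h^{-3}\cdot h^4\cdot h^{-3}=h^{-2}$. The uniform graph Poincar\'e inequality controls the flux vector in $\ell^2$, which is the wrong norm.
\end{enumerate}
The paper instead lets the Scott--Zhang interpolant of a Bogovskii field carry the bulk flux. It uses face bubbles only for the defect $\delta_F=O(h^{3/2}|u|_{H^1(\omega_F)})$, which gives zero means on every tetrahedron directly.
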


The uniformly bounded right inverse formulation in \cref{thm:main} is the operator form of the discrete inf-sup condition. The standard mixed method framework is presented in \cite{BoffiBrezziFortin2013}. The right inverse is constructed in four stages, each preserving the invariants established by the preceding stages. The patchwise organization follows the macroelement principle introduced in \cite{Stenberg1990}. The local maps in the present argument are exhibited explicitly and assembled with uniformly bounded overlap. First, an analytic argument using Bogovskii and Scott-Zhang interpolation lifts arbitrary image pressures to zero means on every tetrahedron. Next, edge-jet compatibility and supported cubic vertex-star lifts realize zero broken vertex values while preserving these zero means. Subsequently, supported edge-star lifts followed by fixed-cluster mean routing eliminate edge traces. Finally, an analytic element-bubble isomorphism eliminates the interior residual. For the target degrees $k=4$ and $k=5$, the Freudenthal specific local ingredients are the supported vertex-star lift, the supported edge-star lift, and the fixed-cluster routing family. The first two are given by explicit barycentric formulas, and their exact finite dimensional realizations govern the subsequent proofs.

\section{Local Operators and Preprocessing}\label{sec:preprocess}

The explicit constructions throughout our proof rely on a fundamental barycentric skeleton-bubble calculus. The cubic vertex jets, the face transfers, and the endpoint and middle edge modes below are obtained by choosing the exponents so that exactly the prescribed skeleton trace survives.

\begin{lemma}[Barycentric skeleton-bubble calculus]\label{lem:skeleton-bubble}
If $S=[x_0,\ldots,x_r]$ is a mesh simplex and $\alpha_i\geq1$, then the function
\begin{equation}\label{eq:skeleton-bubble-def}
 b_{S,\alpha}|_T=\prod_{i=0}^r\lambda_{T,x_i}^{\alpha_i} \quad(T\supset S),\qquad b_{S,\alpha}=0\quad(T\not\supset S)
\end{equation}
is continuous and supported on the closed star of $S$. It has zero exterior trace whenever $S$ is not contained in a boundary face, and its gradient satisfies
\begin{equation}\label{eq:skeleton-bubble-rule}
 \nabla b_{S,\alpha} = \sum_{i=0}^r\alpha_i\lambda_{x_i}^{\alpha_i-1} \prod_{j\ne i}\lambda_{x_j}^{\alpha_j}\nabla\lambda_{x_i}.
\end{equation}
On any subsimplex, a summand in \eqref{eq:skeleton-bubble-rule} vanishes unless every remaining barycentric factor is carried by that subsimplex.
\end{lemma}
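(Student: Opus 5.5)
The plan is to reduce everything to the global vertex hat functions. For each vertex $x$ of $\T_h$, let $\lambda_x$ denote the continuous piecewise linear hat function. On every $T\ni x$ it coincides with the barycentric coordinate $\lambda_{T,x}$, and it vanishes on every $T\not\ni x$. The first step is to show that the piecewise definition \eqref{eq:skeleton-bubble-def} agrees with the global product $\prod_{i=0}^r\lambda_{x_i}^{\alpha_i}$. If $T\supset S$, the two expressions agree factor by factor. If $T\not\supset S$, some vertex $x_i$ of $S$ is not a vertex of $T$, so $\lambda_{x_i}|_T=0$, and since $\alpha_i\ge1$ the product vanishes on $T$. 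Because each $\lambda_{x_i}$ is continuous on $\overline\Omega$, so is the product, which gives continuity. The support statement follows from the same computation: the product is nonzero only in the interiors of tetrahedra containing all the $x_i$, so its support lies in the union of the closed tetrahedra $T\supset S$, which is the closed star of $S$.

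For the trace, I would argue through faces. Take a boundary face $F\subset\partial\Omega$ and an element $T$ with $F\subset T$. If $S\not\subset F$, then some $x_i$ is the vertex of $T$ opposite $F$, or does not belong to $T$ at all. In the first case $\lambda_{T,x_i}$ vanishes on $F$; in the second the restriction to $T$ is already zero. Either way the trace on $F$ is zero because $\alpha_i\ge1$. Since $\partial\Omega$ is the union of such faces, the exterior trace vanishes whenever $S$ lies in no boundary face.

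For the gradient, $b_{S,\alpha}$ is continuous and piecewise polynomial, hence in $H^1$, and its weak gradient is the broken gradient. Applying the product rule on each $T\supset S$ gives \eqref{eq:skeleton-bubble-rule} with $\nabla\lambda_{x_i}=\nabla\lambda_{T,x_i}$. On $T\not\supset S$ both sides vanish, since every summand still contains a factor $\lambda_{x_j}^{\alpha_j}$ or $\lambda_{x_j}^{\alpha_j-1}$ with $\lambda_{x_j}|_T=0$. The only delicate point is a factor with exponent $\alpha_i-1=0$. So I would read the formula as: when $\alpha_i=1$, the $i$-th summand still contains the factors $\lambda_{x_j}$ for $j\ne i$, and when $r\ge1$ one of these vanishes on $T$. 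When $r=0$ the formula reduces to $\nabla\lambda_x$ on the star, which is correct, with the convention that $\nabla\lambda_x$ is the broken gradient, zero outside the star.

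The last assertion is the one needing care. Fix a subsimplex $E$ of some $T\supset S$ and consider the $i$-th summand. It contains $\lambda_{x_j}^{\alpha_j}$ for each $j\ne i$, with $\alpha_j\ge1$, and also $\lambda_{x_i}^{\alpha_i-1}$ when $\alpha_i\ge2$. A barycentric coordinate $\lambda_{T,y}$ vanishes identically on $E$ exactly when $y$ is not a vertex of $E$. Hence the summand vanishes on $E$ unless every barycentric factor appearing with positive exponent is attached to a vertex of $E$. Conversely, the remaining polynomial factor is nonzero in the relative interior of $E$ when these vertices belong to $E$. This is exactly the claimed selection rule. I expect the main obstacle to be bookkeeping rather than mathematics: stating precisely which factors "remain" when $\alpha_i=1$, and ensuring that the broken gradients $\nabla\lambda_{x_i}$ need not be continuous across faces. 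The rule only concerns the vanishing of the scalar coefficient, so that discontinuity does not matter.
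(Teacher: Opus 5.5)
Your proof is correct. It matches the paper on the trace statement (a boundary face either contains $S$ or omits a vertex of $S$), on the product rule, and on the restriction to subsimplices. The difference is the conformity step.

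The paper checks each interior face directly: either both neighbouring tetrahedra contain $S$ and the barycentric traces agree, or the face omits a vertex of $S$ and both traces vanish. You instead identify $b_{S,\alpha}$ with the global product $\prod_i\lambda_{x_i}^{\alpha_i}$ of continuous hat functions.

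Your route gives continuity on all of $\overline\Omega$ and the support statement in one line. It avoids passing from agreement on faces to continuity across edges and vertices. It also gives a global meaning to the unsubscripted $\lambda_{x_i}$ in \eqref{eq:skeleton-bubble-rule}, and it is the form in which the paper later uses these functions ($\phi_a^2\phi_b$, $\phi_{x_0}\phi_{x_1}\phi_{x_2}$). The paper's face check is the standard conformity criterion and lets a single case split serve for conformity, support and exterior trace.

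There is one slip in your gradient paragraph. For $T\not\supset S$ and $\alpha_i=1$, it is false that some $\lambda_{x_j}$ with $j\ne i$ must vanish on $T$. This fails when $x_i$ is the only vertex of $S$ missing from $T$; take $S=[x_0,x_1]$, $\alpha_0=1$, $x_1\in T$, $x_0\notin T$. That summand vanishes because $\nabla\lambda_{x_i}|_T=0$, the broken-gradient convention you already invoke for $r=0$. More simply, the product rule is a polynomial identity on each $T$, so no case analysis is needed.
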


\begin{proof}
On a common tetrahedron face, either both neighboring elements contain $S$, in which case the barycentric traces agree, or the face omits a vertex of $S$, in which case both traces are zero. This proves conformity and the support statement. The same observation gives the assertion about exterior trace. Formula \eqref{eq:skeleton-bubble-rule} is the standard product rule, and its restriction to a subsimplex proves the last assertion.
\end{proof}

We first construct the two preprocessing operators based on this calculus. Both the element-mean lift and the vertex lift are analytic. The vertex lift combines edge-jet bubbles with face-bubble routing on the six possible vertex stars.

\subsection{Stable Element-Mean and Vertex Lifts}

\begin{lemma}[Element-mean lift]\label{lem:means}
There is a linear map $L_h^0:L^2_0(\Omega)\to V_{h,3}$ such that
\begin{equation}\label{eq:mean-lift}
 \int_T\diver L_h^0q=\int_Tq\quad(T\in\T_h), \qquad |L_h^0q|_1\leq C_0\norm q,
\end{equation}
where $C_0$ is independent of $h$.
\end{lemma}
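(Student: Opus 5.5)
The plan is a continuous lift followed by a quasi-interpolant, corrected by face bubbles. Given $q\in L^2_0(\Omega)$, I first apply the Bogovskii operator on the Lipschitz (convex) domain $\Omega$. This gives $w=Bq\in H^1_0(\Omega;\R^3)$ with $\diver w=q$ and $|w|_1\leq C_\Omega\norm q$. I then set $w_1=\Pi_h w$, where $\Pi_h$ is the componentwise Scott--Zhang interpolant into continuous piecewise linears with zero trace. This $\Pi_h$ is $H^1$-stable, $|w_1|_1\leq C|w|_1$, and it satisfies the local approximation bound $\norm{w-\Pi_h w}_{L^2(T)}\leq Ch\,|w|_{H^1(\omega_T)}$ on the element patch $\omega_T$. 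Both constants are independent of $h$ because $\T_h$ is built from finitely many reference shapes up to translation, scaling and permutation.

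Next I correct the face fluxes. By the divergence theorem, for each $T$,
\[
\int_T\diver(w-w_1)=\sum_{F\subset\partial T}\int_F (w-w_1)\cdot n_{T,F}.
\]
It therefore suffices to build $z\in V_{h,3}$ with $\int_F z\cdot n_F=\int_F(w-w_1)\cdot n_F$ for every interior face $F$, where $n_F$ is a fixed unit normal. On boundary faces both sides vanish because both fields have zero trace. For this I use the cubic face bubble from \cref{lem:skeleton-bubble}: $b_F=b_{F,(1,1,1)}$ with $F=[x_0,x_1,x_2]$, which is continuous, supported on the two tetrahedra sharing $F$, and vanishes on the rest of the skeleton. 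I put
\[
z=\sum_{F\ \text{interior}}\frac{\int_F(w-w_1)\cdot n_F}{\int_F b_F}\,b_F\,n_F .
\]
Since $b_F$ vanishes on every other face, $\int_{F'}z\cdot n_{F'}$ picks up only the $F'=F$ term, so the face fluxes match exactly. The field $z$ lies in $V_{h,3}$ and has zero trace. Then $L_h^0q:=w_1+z$ satisfies
\[
\int_T\diver L_h^0q=\int_T\diver w=\int_Tq,
\]
and it is linear in $q$.

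The main step, though standard, is the uniform bound on $z$. Scaling gives $\int_F b_F=c\,|F|\sim h^2$ and $|b_F|_{H^1}\sim h^{1/2}$. The scaled trace inequality gives
\[
\Bigl|\int_F(w-w_1)\cdot n_F\Bigr|\leq |F|^{1/2}\norm{w-w_1}_{L^2(F)}\leq Ch\bigl(h^{-1/2}\norm{w-w_1}_{L^2(T)}+h^{1/2}|w-w_1|_{H^1(T)}\bigr).
\]
Combining this with the Scott--Zhang estimates bounds it by $Ch^{3/2}|w|_{H^1(\omega_T)}$. Each coefficient is then $O(h^{-1/2}|w|_{H^1(\omega_T)})$, so each summand of $z$ has gradient norm at most $C|w|_{H^1(\omega_T)}$. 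Because every patch $\omega_T$ overlaps only boundedly many others, summing the squares gives $|z|_1\leq C|w|_1\leq C\norm q$. This yields \eqref{eq:mean-lift} with $C_0$ independent of $h$.
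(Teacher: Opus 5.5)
Your proposal is correct and is essentially the paper's own proof: a Bogovskii lift, the boundary-preserving Scott--Zhang $P_1$ interpolant, and cubic face bubbles $b_F n_F$ with coefficients $\int_F(w-w_1)\cdot n_F/\int_F b_F$. The estimate is likewise the same, using the scaled trace inequality with $\int_F b_F\sim h^2$, $|b_Fn_F|_{H^1}\sim h^{1/2}$, and bounded patch overlap.
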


\begin{proof}
The Bogovskii right-inverse theorem in the form of \cite[Theorem~III.3.1]{Galdi2011} applies because the cube is a bounded Lipschitz domain. Hence there is a linear operator $\mathcal B:L^2_0(\Omega)\to H^1_0(\Omega;\R^3)$ with
\begin{equation}
 \diver\mathcal Bq=q,\qquad |\mathcal Bq|_1\leq C_B\norm q.
\end{equation}
Let $J_h:H^1_0(\Omega)^3\to [C^0P_1(\T_h)]^3\cap H^1_0(\Omega)^3$ be the Scott-Zhang quasi-interpolant. We use the boundary-preserving construction of \cite[Section~2]{ScottZhang1990}, the local estimate \cite[(4.3)]{ScottZhang1990}, and the corresponding global stability \cite[Corollary~4.1, (4.5)]{ScottZhang1990}, specialized to $p=2$, polynomial degree one, and Sobolev orders zero and one. These specific estimates are given by
\begin{equation}\label{eq:SZ}
 |J_hu|_{H^1(T)}\leq C|u|_{H^1(\omega_T)},\qquad \norm{u-J_hu}_{L^2(T)}\leq Ch|u|_{H^1(\omega_T)}.
\end{equation}
Here $\omega_T$ is a fixed-ring element patch. The hypotheses in that theorem are a shape-regular simplicial mesh and homogeneous trace data. Both conditions hold here with constants independent of $h$.

Set $u=\mathcal Bq$. For each interior triangular face $F$, fix a unit normal $n_F$. Let $b_F$ be the continuous piecewise-cubic face bubble on the two tetrahedra incident to $F$. On either incident tetrahedron it is the product of the three barycentric coordinates belonging to the vertices of $F$, and it is zero elsewhere. The two traces agree on $F$ and $b_F$ vanishes on every other face, making $b_Fn_F\in V_{h,3}$. We then put
\begin{equation}
 \delta_F=\int_F(u-J_hu)\cdot n_F,\qquad c_F=\delta_F\big/\int_Fb_F,\qquad L_h^0q=J_hu+\sum_{F\text{ interior}}c_Fb_Fn_F.
\end{equation}
On boundary faces both $u$ and $J_hu$ have zero trace. On an interior face the correction makes the integrated normal flux equal to that of $u$. The divergence theorem therefore proves the first assertion in \eqref{eq:mean-lift} element by element.

Applying the scaled trace inequality to $u-J_hu$ and subsequently using \eqref{eq:SZ} gives
\begin{equation}
 |\delta_F| \leq |F|^{1/2}\norm{u-J_hu}_{L^2(F)} \leq C\bigl(h^{1/2}\norm{u-J_hu}_{L^2(\omega_F)} +h^{3/2}|u-J_hu|_{H^1(\omega_F)}\bigr) \leq C h^{3/2}|u|_{H^1(\omega_F)}.
\end{equation}
On this uniform family, $\int_Fb_F=c_bh^2$ with $c_b>0$ fixed and $|b_Fn_F|_{H^1(\omega_F)}\leq C h^{1/2}$. Hence we bound the correction by
\begin{equation}
 |c_Fb_Fn_F|_{H^1(\omega_F)}\leq C|u|_{H^1(\omega_F)}.
\end{equation}
The face patches have bounded overlap. Together with the first estimate in \eqref{eq:SZ}, this yields $|L_h^0q|_1\leq C|u|_1\leq C_0\norm q$.
\end{proof}

A related element-mean construction appears in Zhang's Lemma 3.1 \cite[pp.~672-673, (3.1)]{Zhang2011}.

For a mesh vertex $a$, let $\T(a)=\{T\in\T_h:a\in T\}$ and let $\omega_a$ denote the union of tetrahedra in $\T(a)$. Write $\mathcal E(a)$ for the geometric mesh edges issuing from $a$ and $\mathcal E^\circ(a)$ for those not contained in $\partial\Omega$. The superscript $\circ$ implies that the relative interior of the edge lies in $\Omega$, not necessarily that both endpoints are interior.

If $T=[a,b_1,b_2,b_3]$ and $\lambda_{T,b_i}$ is the barycentric coordinate of $b_i$, we define the edge-jet map as
\begin{equation}\label{eq:vertex-jet-map}
 A_a:(\R^3)^{\mathcal E^\circ(a)}\longrightarrow\R^{\T(a)},\qquad (A_as)_T=\sum_{i=1}^3\sum_{j=1}^3 \partial_j\lambda_{T,b_i}\,s_{[a,b_i],j},
\end{equation}
where a term is omitted when $[a,b_i]\subset\partial\Omega$. The formula remains strictly independent of the order of $b_1,b_2,b_3$.

\begin{lemma}[Complete compatibility from edge jets]\label{lem:vertex-jets}
For every $k\geq2$, the compatibility constraints are satisfied such that
\begin{equation}\label{eq:vertex-image}
 \left\{\bigl((\diver w)|_T(a)\bigr)_{T\in\T(a)}:w\in V_{h,k}\right\} =\operatorname{im}A_a.
\end{equation}
Thus \eqref{eq:vertex-jet-map} characterizes all vertex compatibility identities in $Q_{h,k}$.
\end{lemma}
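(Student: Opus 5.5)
Both inclusions will be proved by working only with first-order data of $w$ at the vertex $a$. The starting point is that, for $w\in V_{h,k}$ and $T\in\T(a)$,
\[
 (\diver w)|_T(a)=\operatorname{tr}\bigl(\nabla (w|_T)(a)\bigr),
\]
and $\nabla(w|_T)(a)$ is determined by $w(a)$ together with the three directional derivatives of $w|_T$ along the edges $[a,b_i]$ of $T$. Write $\tau_{T,i}=b_i-a$. Then
\[
 \nabla(w|_T)(a)=\sum_{i=1}^3 \partial_{\tau_{T,i}}w(a)\otimes\nabla\lambda_{T,b_i},
\]
because $\nabla\lambda_{T,b_i}\cdot\tau_{T,j}=\delta_{ij}$ and $\sum_i\nabla\lambda_{T,b_i}=-\nabla\lambda_{T,a}$.

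\textbf{Inclusion $\subseteq$.} Set $s_{[a,b]}:=\partial_{b-a}w(a)\in\R^3$, the one-sided tangential derivative of $w$ at $a$ along $[a,b]$. This is a single-valued edge quantity: it depends only on the trace of $w$ on the edge, and that trace is shared by every tetrahedron containing the edge, by continuity. Taking the trace of the formula above gives
\[
 (\diver w)|_T(a)=\sum_i\sum_j\partial_j\lambda_{T,b_i}\,s_{[a,b_i],j}=(A_as)_T .
\]
For boundary edges $[a,b]\subset\partial\Omega$ we have $w\equiv0$ on the edge, so $s_{[a,b]}=0$, which is exactly why those terms are omitted from \eqref{eq:vertex-jet-map}. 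Independence of the ordering of $b_1,b_2,b_3$ is immediate.

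\textbf{Inclusion $\supseteq$.} Given $s\in(\R^3)^{\mathcal E^\circ(a)}$, I would realize it with the cubic edge bubble of \cref{lem:skeleton-bubble}. For each interior edge $e=[a,b]$, let $S=e$, $\alpha=(1,2)$, and put
\[
 w_e=b_{e,(1,2)}\,\bigl(\text{multiplier}\bigr)
\]
with the multiplier still to be fixed; more simply, take $\phi_e=\lambda_a^2\lambda_b$ on the star of $e$. By \cref{lem:skeleton-bubble}, $\phi_e$ is continuous, supported on the star of $e$, and has zero exterior trace because $e\not\subset\partial\Omega$. Hence $\phi_e s_e\in V_{h,3}\subset V_{h,k}$ for $k\ge3$. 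Its gradient at $a$ equals $\nabla\lambda_b$ on each $T\supset e$ and is zero on elements not containing $e$. Consequently the tangential derivative of $\phi_e$ along $[a,b]$ is $1$, and along every other edge from $a$ it is $0$. Summing $w=\sum_e s_e\phi_e$ produces exactly the data $s$, and the formula from the first inclusion gives $(\diver w)|_T(a)=(A_as)_T$.

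\textbf{The case $k=2$ (main obstacle).} The stated range $k\ge2$ requires a quadratic version of the construction. Here I would use the Lagrange nodal function of the edge midpoint, $\psi_e=4\lambda_a\lambda_b$. It is still supported on the star of $e$, vanishes on boundary faces when $e$ is interior, and satisfies $\partial_{b-a}\psi_e(a)=4$. However, its derivatives along the other edges $[a,c]$ of a tetrahedron $T\supset e$ are $4\lambda_b(a)\,\partial_{c-a}\lambda_a=0$, since $\lambda_b(a)=0$. So the same trick works, now with the scaling $\tfrac14$. The remaining care is to confirm that the support statement gives $H^1_0$ conformity at boundary vertices $a$. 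This holds because the face traces on $\partial\Omega$ contain the factor $\lambda_a\lambda_b$ with $b\notin\partial\Omega$, or else the edge $e$ is not contained in the boundary face, so some factor vanishes there. With this, the quadratic edge functions handle every $k\ge2$ uniformly, and the cubic variant becomes unnecessary.
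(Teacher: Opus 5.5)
Your proposal is correct and follows the paper's proof: the forward inclusion uses the single-valued edge jets $s_{[a,b]}=\partial_{b-a}w(a)$ and the dual-basis expansion of $\nabla(w|_T)(a)$, and the reverse inclusion uses the quadratic edge bubble $\tfrac14\psi_e=\lambda_a\lambda_b$, which is exactly the paper's $\phi_{a,e}$, so the cubic detour can be dropped. Two small touch-ups: the derivative of $\psi_e$ along another edge $[a,c]$ vanishes simply because $\lambda_b\equiv0$ on $[a,c]$ (your product-rule expression omits the term $4\lambda_a(a)\,\partial_{c-a}\lambda_b$, which is also zero), and since $b$ can lie on $\partial\Omega$ even for an active edge, the boundary argument should just be that no physical boundary face contains both $a$ and $b$, so $\lambda_a\lambda_b$ vanishes on each such face.
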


\begin{proof}
Let $w\in V_{h,k}$. Continuity makes the restriction of each component $w_j$ to a geometric edge single-valued. Hence the directional derivative
\begin{equation}
 s_{[a,b],j}=\partial_{b-a}(w_j|_{[a,b]})(a)
\end{equation}
is common to every incident tetrahedron. If $[a,b]\subset\partial\Omega$, the homogeneous trace naturally enforces $s_{[a,b],j}=0$. On the tetrahedron $T=[a,b_1,b_2,b_3]$, the three directions $b_i-a$ form a complete basis and yield
\begin{equation}
 \nabla(w_j|_T)(a)=\sum_{i=1}^3s_{[a,b_i],j} \nabla\lambda_{T,b_i}.
\end{equation}
Taking the trace of the vector gradient proves the forward inclusion in \eqref{eq:vertex-image}.

Conversely, we fix $e=[a,b]\in\mathcal E^\circ(a)$. On every tetrahedron containing $e$, we set $\phi_{a,e}=\lambda_a\lambda_b$ and extend it by zero outside the edge star. Its traces agree, and it vanishes on every exterior face of that star. It also vanishes on $\partial\Omega$ because a physical boundary face meeting the support contains at most one endpoint of $e$, since $e$ is not a boundary edge. Along $e$, parametrized from $a$ to $b$, we have $\phi_{a,e}=t(1-t)$, making its directional derivative at $a$ equal to one. Along every other edge issuing from $a$, this derivative is zero. Therefore the vector field
\begin{equation}
 w=\sum_{e\in\mathcal E^\circ(a)}\sum_{j=1}^3 s_{e,j}\phi_{a,e}e_j\in V_{h,2}\subset V_{h,k}
\end{equation}
has precisely the prescribed edge jets. Applying the first half of the proof computes its vertex divergence vector as $A_as$ and strictly proves the reverse inclusion.
\end{proof}

At a boundary vertex, the omitted edge-jet columns correspond precisely to the directional derivatives annihilated by the homogeneous trace condition. Figure \ref{fig:vertex-states} illustrates the six possible vertex locations in the grid box. The wireframe visually represents the entire box rather than the tetrahedron star and distinguishes the two inequivalent types of box corner and boundary edge. In this figure, $r,s,t$ denote strictly interior grid coordinates.

\begin{figure}[t]
\centering
\begin{tikzpicture}[font=\scriptsize]
\newcommand{\vertexpanel}[5]{%
\begin{scope}[shift={(#1,#2)},scale=.72]
\draw[gray!75]

(0,0)--(1.15,0)--(1.15,.85)--(0,.85)--cycle

(.42,.30)--(1.57,.30)--(1.57,1.15)--(.42,1.15)--cycle

(0,0)--(.42,.30) (1.15,0)--(1.57,.30)

(1.15,.85)--(1.57,1.15) (0,.85)--(.42,1.15);
\fill[red!75!black] (#3,#4) circle (3pt);
\node[align=center,text width=2.45cm] at (.78,-.72) {#5};
\end{scope}}
\vertexpanel{0}{2.15}{.42}{.30}{$2T$ corner $(0,0,N)$\\$0$ active edges}
\vertexpanel{2.55}{2.15}{.42}{.725}{$4T$ edge $(0,t,N)$\\$0$ active edges}
\vertexpanel{5.10}{2.15}{0}{0}{$6T$ corner $(0,0,0)$\\$1$ active edge}
\vertexpanel{0}{0}{.21}{.15}{$8T$ edge $(0,0,t)$\\$2$ active edges}
\vertexpanel{2.55}{0}{.21}{.575}{$12T$ face $(0,s,t)$\\$4$ active edges}
\vertexpanel{5.10}{0}{.785}{.575}{$24T$ interior $(r,s,t)$\\$14$ active edges}
\end{tikzpicture}
\caption{Representatives of all vertex states. Coordinate permutations and central inversion generate the other placements. The labels give the size of the incident tetrahedron star and the number of incident edges not lying in the physical boundary.}
\label{fig:vertex-states}
\end{figure}

\begin{lemma}[Six vertex states and finite coverage]\label{lem:vertex-coverage}
Every vertex on every $N^3$ Freudenthal mesh exhibits exactly one of the six states in the table. Up to integer translation, coordinate permutation, and simultaneous reversal of all three coordinates, the local star together with its active-edge set strictly matches the canonical state shown. Consequently the six unit-lattice matrices in \eqref{eq:vertex-jet-map} comprehensively cover arbitrary $N$.
\begin{center}
\small
\begin{tabular}{@{}llrrrr@{}}
\toprule
location & canonical $a$ in $N=3$ & $|\T(a)|$ & $|\mathcal E(a)|$ &
$|\mathcal E^\circ(a)|$ & placements\\
\midrule
two-tet corner &(0,0,3)&2&4&0&6\\
four-tet boundary edge &(0,1,3)&4&6&0&$6(N-1)$\\
six-tet corner &(0,0,0)&6&7&1&2\\
eight-tet boundary edge &(0,0,1)&8&8&2&$6(N-1)$\\
twelve-tet boundary face &(0,1,1)&12&10&4&$6(N-1)^2$\\
interior &(1,1,1)&24&14&14&$(N-1)^3$\\
\bottomrule
\end{tabular}
\end{center}
\end{lemma}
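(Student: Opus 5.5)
The plan is to show that the local star of a vertex $a$, together with its active-edge set, depends only on which coordinates of $a$ are extremal. For each coordinate $a_i\in\{0,1,\dots,N\}$ we record whether $a_i=0$, $a_i=N$, or $0<a_i<N$. Within one cube the Freudenthal tetrahedra are exactly the simplices $x_C+h\operatorname{conv}\{0,e_{\sigma(1)},e_{\sigma(1)}+e_{\sigma(2)},(1,1,1)\}$, so every mesh edge is a translate of $h(e_S)$ with $e_S=\sum_{i\in S}e_i$ for a nonempty $S\subset\{1,2,3\}$. Equivalently, the edges at an interior vertex are the 14 vectors $\pm h e_S$. The tetrahedra at $a$ are in bijection with pairs consisting of an incident cube, indexed by a sign vector $\epsilon\in\{\pm1\}^3$ with $C=a+h\prod_i[\min(0,\epsilon_i),\max(0,\epsilon_i)]$, and a tetrahedron of that cube containing $a$. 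Since every cube is split along its main diagonal, $a$ lies in all six tetrahedra when it is the $0$ or $(1,1,1)$ corner of the cube, that is, when $\epsilon=\pm(1,1,1)$. At any other corner it lies in exactly two tetrahedra, namely the two chains passing through that corner. This gives $24=2\cdot6+6\cdot2$ tetrahedra at an interior vertex.

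Next I intersect with the box. A cube is present iff $\epsilon_i=+1$ is allowed when $a_i<N$ and $\epsilon_i=-1$ is allowed when $a_i>0$. An edge $\pm e_S$ is active iff it is present and its relative interior avoids $\partial\Omega$. That holds iff no coordinate stays fixed at $0$ or $N$ along it, i.e. every extremal coordinate must belong to $S$ with the inward sign.

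The six states then come from a case split on the number $m$ of extremal coordinates and on their sign pattern.
\begin{itemize}
\item $m=0$: the interior state, with $24$ tetrahedra and $14$ active edges.
\item $m=1$: the face state. Half the cubes are present, one of the two diagonal cubes and three off-diagonal ones, giving $6+3\cdot2=12$ tetrahedra. The edges $e_S$ with the extremal index $i\in S$ and the inward sign number $4$.
\item $m=2$, equal extremal sides: for example $(0,0,t)$. The diagonal cube $(+,+,+)$ is present, giving $6+2=8$ tetrahedra, and the active edges are $e_{12},e_{123}$, so $2$.
\item $m=2$, opposite sides: for example $(0,t,N)$. No diagonal cube is present, giving $4$ tetrahedra. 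An active edge would need both $+e_1$ and $-e_3$ components in a single $\pm e_S$, which is impossible, so $0$ active edges.
\item $m=3$, all sides equal: the points $(0,0,0)$ and $(N,N,N)$. There are $6$ tetrahedra and only $e_{123}$ is active.
\item $m=3$, mixed sides: the other six corners. There are $2$ tetrahedra and $0$ active edges.
\end{itemize}
Counting $|\mathcal E(a)|$ is the same bookkeeping without the interiority requirement.

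For the symmetry claim, I note that coordinate permutations and the central inversion $x\mapsto(N,N,N)-x$ map the Freudenthal mesh to itself. They permute the $e_S$ set and preserve the diagonal direction $(1,1,1)$ up to sign. Translations by $h\mathbb Z^3$ preserve the star structure away from the boundary, because the star depends only on the pattern recorded above. Hence two vertices with the same pattern, modulo these symmetries, have congruent stars and active-edge sets. This in turn gives identical matrices $A_a$ in \eqref{eq:vertex-jet-map} up to relabeling and the scaling $h^{-1}$. The placement counts follow by counting coordinate patterns: $6$ mixed corners, $2$ equal corners, and $12$ box edges of which $6$ are of each kind, each with $N-1$ interior points. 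There are $6(N-1)^2$ face points and $(N-1)^3$ interior points, and these sum to $(N+1)^3$.

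The main obstacle is deciding which $m=2$ and $m=3$ configurations are equivalent under the allowed symmetry group. Permutations alone cannot exchange $0$ and $N$, and inversion flips all coordinates at once. So I must check that the equal-sided and opposite-sided classes are each single orbits and are genuinely distinct. Their distinctness is already certified by the different tetrahedron counts.
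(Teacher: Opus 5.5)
Your proposal is correct and follows essentially the same route as the paper. Your sign vector $\epsilon\in\{\pm1\}^3$ is the paper's corner word, and you count $6$ or $2$ tetrahedra per incident cube, i.e.\ $j!(3-j)!$. You decide activity by whether an edge stays in a physical coordinate plane, and you use translations, coordinate permutations, and central inversion for the symmetry claim. Your explicit $\pm e_S$ bookkeeping and the orbit check for the $m=2,3$ patterns spell out what the paper compresses into ``directly listing those subchains.''
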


\begin{proof}
We consider one cube containing $a$ and encode the position of $a$ in that cube by $\epsilon\in\{0,1\}^3$. A Freudenthal chain contains that corner exactly when its first $j=|\epsilon|$ coordinate steps are the coordinates on which $\epsilon$ equals one. There are therefore
\begin{equation}\label{eq:corner-chain-count}
 j!(3-j)!
\end{equation}
tetrahedra in that cube containing the corner. At an interior vertex all eight corner words occur, so summing \eqref{eq:corner-chain-count} yields exactly 24. At a boundary-face vertex one bit is fixed and the other two vary, which results in 12 tetrahedra. At a boundary-edge vertex two bits are fixed, where equal fixed bits yield 8 and unequal fixed bits yield 4. At a box corner all bits are fixed, meaning the two constant words give 6 and the other six words give 2. This proves both the six star sizes and the placement counts, which perfectly sum to $(N+1)^3$.

The incident edges are the one, two, or three-step subchains joining $a$ to another corner. Directly listing those subchains computes the two edge-count columns. A listed edge is inactive precisely when its endpoints share a physical coordinate plane. Finally, translations and coordinate permutations preserve the coordinate order subdivision, while central inversion reverses every chain and hence preserves the same set of tetrahedra. These operations carry each row directly to its canonical state. The $N=1$ mesh contains the two corner states, and the $N=2$ mesh contains all six states. Since the supported space below vanishes on the entire boundary of $\omega_a$, the local state involves no additional boundary data.
\end{proof}

The vertex construction exclusively uses the following local statement. Let
\begin{equation}
 S_a=\{v\in[C^0P_3(\T(a))]^3:v=0\text{ on }\partial\omega_a\}.
\end{equation}
Every member extends by zero to $V_{h,3}$. We then define
\begin{equation}
 M_av=\left(\int_T\diver v\right)_{T\in\T(a)},\quad B_av=\bigl((\diver v)|_T(b)\bigr)_{T\in\T(a),\ b\in\mathcal V(T)\setminus\{a\}},\quad D_av=\bigl((\diver v)|_T(a)\bigr)_{T\in\T(a)}.
\end{equation}

\begin{lemma}[Supported vertex-star lift]\label{lem:vertex-local}
For every mesh vertex $a$, we observe that
\begin{equation}\label{eq:vertex-local-image}
 D_a(\ker M_a\cap\ker B_a)=\operatorname{im}A_a.
\end{equation}
There is a fixed linear right inverse $G_a$ on this image such that
\begin{equation}\label{eq:vertex-local-properties}
 D_aG_ad=d,\qquad B_aG_ad=0,\qquad M_aG_ad=0.
\end{equation}
Furthermore, the magnitude is bounded by
\begin{equation}\label{eq:vertex-local-scale}
 |G_ad|_{H^1(\omega_a)}^2\leq C_Vh^3|d|_{\ell^2}^2,
\end{equation}
where $C_V$ is independent of $a,h,$ and $N$.
\end{lemma}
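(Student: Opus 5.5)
The inclusion $D_a(\ker M_a\cap\ker B_a)\subseteq\operatorname{im}A_a$ comes for free. Every $v\in S_a$ extends by zero to an element of $V_{h,3}\subset V_{h,k}$, so \cref{lem:vertex-jets} places $D_av$ in $\operatorname{im}A_a$. The real work is the reverse inclusion, together with a right inverse whose norm is controlled uniformly. Since $\operatorname{im}A_a$ is finite dimensional, it suffices to construct $G_a$ on a basis of it: write $d=A_as$ with $s$ chosen in a fixed complement of $\ker A_a$, and define $G_a$ linearly in $s$. By \cref{lem:vertex-coverage} there are only six vertex states up to translation, coordinate permutation and central inversion. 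It is therefore enough to build $G_a$ on the six reference stars with $h=1$ and transport it to every other vertex by these affine maps.

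On a reference star I would start from the field $w_0=\sum_{e,j}s_{e,j}\phi_{a,e}e_j$ of \cref{lem:vertex-jets}, but with a weight that makes it vanish on $\partial\omega_a$. The natural choice is the cubic edge mode $\lambda_a^2\lambda_b$ in place of $\lambda_a\lambda_b$. By \cref{lem:skeleton-bubble} it is supported on the edge star and vanishes on every face not containing both $a$ and $b$; in particular it vanishes on $\partial\omega_a$. Its derivative along $e$ at $a$, however, is zero. I would therefore use instead $\lambda_a^2\lambda_b$ multiplied by a linear correction, or a combination such as $\lambda_a\lambda_b(2\lambda_a-\lambda_b)$ arranged so that the jet at $a$ along $e$ equals one. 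The factor $\lambda_a$ should make the field vanish on the faces opposite $a$. It remains to check that the traces on the lateral faces, which contain both $a$ and $b$, are matched across neighbouring tetrahedra; this holds because the formula is written in shared barycentric coordinates. Consequently $D_aw_0=d$ and $w_0\in S_a$.

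Next, the constraints $B_aw_0$ and $M_aw_0$ must be removed without changing $D_a$. For this I would add correctors from $S_a$ whose divergence vanishes at $a$ in every $T\in\T(a)$. Candidates are the interior face bubbles $\lambda_a\lambda_{b}\lambda_{c}\,n_F$ on faces $F=[a,b,c]$ interior to $\omega_a$, the element bubbles $\lambda_a\lambda_{b_1}\lambda_{b_2}\lambda_{b_3}$, which have degree $4$ and so are not admissible in $P_3$, and edge modes of the form $\lambda_a\lambda_b^2$. By \cref{lem:skeleton-bubble}, the divergence of each candidate at $a$ is a sum of terms that all retain at least one factor $\lambda_b$, hence vanishes at $a$; at the vertices $b$ the same lemma tells exactly which summands survive. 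The task is then to show that, on each of the six reference stars, the linear map from the span of these correctors to the values $(B_a,M_a)$ hits $(B_aw_0,M_aw_0)$. This is a finite rank computation, and I would carry it out exactly in rational arithmetic. The least-squares or minimal-norm solution then defines $G_a$ on the reference star.

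The main obstacle is exactly this solvability. The count of free cubic degrees of freedom in $S_a$ with zero vertex-divergence at $a$ has to exceed the $|\T(a)|(1+3)$ constraints, and there may be hidden compatibility relations among the $B_a$ values at the neighbouring vertices $b$; these are the analogue of \cref{lem:vertex-jets} at $b$, with vanishing jets from the boundary of $\omega_a$. My first step would be to show that such relations are automatically satisfied by $w_0$, because the corner values at $b$ lie in $\operatorname{im}A_b$ restricted to the star, and then to verify the rank on each of the six stars by explicit computation. If $P_3$ turns out to be too small on the $24$-tetrahedron interior star, I would fall back on face-transfer modes $\lambda_a\lambda_b\lambda_c$ routed around the star to cancel the means.

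Finally, the scaling bound \eqref{eq:vertex-local-scale} follows once the reference operator is fixed. Divergence values scale like $h^{-1}$ times the reference gradients, and $|\cdot|_{H^1}^2$ scales like $h$. With $d$ of unit size the physical field is $h$ times the reference field, so $|G_ad|_{H^1}^2\le h^2\cdot h\,C|d|^2=C_Vh^3|d|^2$. Here $C_V$ is the maximum of the norms of the six reference operators, and permutations and inversion are isometries.
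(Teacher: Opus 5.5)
Your forward inclusion and your reduction of the bound to six reference stars by translation, permutation and inversion are correct and agree with the paper. The reverse inclusion, which is the real content of the lemma, is not established. It rests on a miscomputation, and you then defer it to a rank check that you do not perform and partly doubt.

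The miscomputation is the claim that $\lambda_a^2\lambda_b$ has zero derivative along $e$ at $a$. On $e$ it restricts to $(1-t)^2t$, whose derivative at $t=0$ is $1$; it is $\lambda_a\lambda_b^2$ that has zero jet at $a$. More is true: $\nabla(\lambda_a^2\lambda_b)=2\lambda_a\lambda_b\nabla\lambda_a+\lambda_a^2\nabla\lambda_b$ equals $\nabla\lambda_{T,b}$ at $a$ and vanishes at every other vertex, because both terms carry $\lambda_a$. Hence the paper's raw field $W_a(s)=\sum_{[a,b]\in\mathcal E^\circ(a)}\phi_a^2\phi_b\,s_{ab}$ lies in $S_a$ and already gives $D_aW_a(s)=A_as$ and $B_aW_a(s)=0$. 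Your substitute $\lambda_a\lambda_b(2\lambda_a-\lambda_b)$ instead produces spurious values $-s_e\cdot\nabla\lambda_{T,a}$ at $(T,b)$. The ``hidden compatibility relations'' at neighbouring vertices, which you call the main obstacle, are therefore an artefact of that choice; adding the $\lambda_a\lambda_b^2$ mode from your own corrector list simply undoes it. Note also that $\lambda_a^2\lambda_b$ times a nonconstant linear factor is quartic, so it is not in $S_a$.

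The second missing idea is why the element means can always be removed without disturbing $D_a$ and $B_a$. You name the right correctors, the face bubbles on faces $[a,b,c]$, but you replace the argument by a degree-of-freedom count and a computer check. The paper's argument needs no computation:
\begin{enumerate}
\item Since $W_a(s)$ vanishes on $\partial\omega_a$, the divergence theorem gives $\sum_{T\in\T(a)}m_T(s)=0$.
\item Every interior face $F$ of the star contains $a$. The transfer $Z_F=\beta_Fn_F/\int_F\beta_F$ shifts one unit of mean from $T_F^+$ to $T_F^-$, and $\nabla\beta_F$ vanishes at every vertex, so $D_a$ and $B_a$ are unchanged.
\item The dual graph of $\T(a)$ is connected by the coordinate-chain description, so routing the means from the leaves to the root of a spanning tree cancels them all.
\end{enumerate}
This yields the explicit $G_a=H_aJ_a$ on every star, including the $24$-tetrahedron one. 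The $h^3$ bound then follows directly: $|J_ad|\le Ch|d|$, the means of $W_a(J_ad)$ are $O(h^3|d|)$, and at most $23$ transfers of squared seminorm $O(h^{-3})$ are used.
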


\begin{proof}
Let $\phi_x$ denote the continuous piecewise affine nodal function associated with a mesh vertex $x$. For an active edge $e=[a,b]$ and a vector $s_e\in\R^3$, we set
\begin{equation}\label{eq:vertex-raw-bubble}
 W_{a,b}(s_e)=\phi_a^2\phi_b\,s_e, \qquad W_a(s)=\sum_{[a,b]\in\mathcal E^\circ(a)}W_{a,b}(s_{ab}).
\end{equation}
The first field is a continuous piecewise cubic supported on the edge star of $[a,b]$. It completely vanishes on $\partial\omega_a$, because a face not containing $a$ annihilates $\phi_a$, while a physical boundary face through $a$ annihilates $\phi_b$ since the active edge $[a,b]$ is not contained in that face. On $T=[a,b,c,d]$, the gradient is
\begin{equation}
 \nabla(\phi_a^2\phi_b) = 2\phi_a\phi_b\nabla\lambda_{T,a} +\phi_a^2\nabla\lambda_{T,b}.
\end{equation}
Consequently we achieve
\begin{equation}\label{eq:vertex-raw-properties}
 D_aW_a(s)=A_as,\qquad B_aW_a(s)=0.
\end{equation}
This identity robustly holds on the full edge-jet space, and no independence of the columns of $A_a$ is required.

It remains to dynamically remove the element means. Every interior face $F$ of the vertex star contains $a$. If $F=[x_0,x_1,x_2]$, we let $\beta_F=\phi_{x_0}\phi_{x_1}\phi_{x_2}$. The face bubble is supported on the two tetrahedra $T_F^+$ and $T_F^-$ sharing $F$. Both $\beta_F$ and $\nabla\beta_F$ vanish at every tetrahedron vertex. We orient a unit normal $n_F$ from $T_F^+$ to $T_F^-$ and define
\begin{equation}\label{eq:vertex-face-transfer}
 Z_F=\frac{\beta_F n_F}{\int_F\beta_F}.
\end{equation}
The divergence theorem definitively yields $\int_{T_F^+}\diver Z_F=1$ and $\int_{T_F^-}\diver Z_F=-1$, and all other element means vanish. Moreover, $B_aZ_F=D_aZ_F=0$.

The dual graph of $\T(a)$, considering adjacency across interior faces, is connected. This logically follows directly from the coordinate chain description where adjacent transpositions connect the chains in one cube, and the chains on the two sides of a cube face through $a$ share a tetrahedron face. We fix a spanning tree, and if we establish $m_T(s)=\int_T\diver W_a(s)$, then $\sum_Tm_T(s)=0$ by the divergence theorem and the zero trace of $W_a(s)$ on $\partial\omega_a$. We sequentially route these means from the leaves of the tree to its root using the transfers in \eqref{eq:vertex-face-transfer}. The result is a linear correction $C_a(s)$ satisfying
\begin{equation}
 M_aC_a(s)=-M_aW_a(s),\qquad B_aC_a(s)=D_aC_a(s)=0.
\end{equation}
Thus the combined term $H_a=W_a+C_a$ correctly obeys
\begin{equation}\label{eq:vertex-edgejet-lift}
 D_aH_as=A_as,\qquad B_aH_as=0,\qquad M_aH_as=0.
\end{equation}
This establishes $\operatorname{im}A_a\subset D_a(\ker M_a\cap\ker B_a)$. The reverse inclusion follows strictly from Lemma \ref{lem:vertex-jets}, because every field in $S_a$ extends by zero to a global conforming cubic.

For $d\in\operatorname{im}A_a$, let $J_ad$ be the inverse of the restriction of $A_a$ to $(\ker A_a)^\perp$, and subsequently define
\begin{equation}\label{eq:vertex-explicit-operator}
 G_ad=H_aJ_ad.
\end{equation}
This choice is fundamentally linear. On an $h$-scaled star, $A_a$ scales as $h^{-1}$, which means $|J_ad|\leq Ch|d|$. The raw fields satisfy $|W_a(J_ad)|_{H^1(\omega_a)}^2 \leq Ch|J_ad|^2\leq Ch^3|d|^2$. Their element means are bounded strictly by $Ch^3|d|$. A transfer normalized as in \eqref{eq:vertex-face-transfer} possesses a squared $H^1$ seminorm bounded by $Ch^{-3}$. The tree has at most $23$ edges, and the six vertex states previously outlined in Lemma \ref{lem:vertex-coverage} supply one common bound for $J_a$. The routed correction therefore inherently satisfies the same $Ch^3|d|^2$ estimate, which rigorously proves \eqref{eq:vertex-local-scale}.
\end{proof}

\begin{proposition}[Uniformly stable vertex lift]\label{prop:vertex}
Fix $k\in\{4,5\}$. If $r\in Q_{h,k}$ and $\int_T r=0$ for every $T\in\T_h$, then one can effectively choose $L_h^1r\in V_{h,3}\subset V_{h,k}$, depending linearly on $r$, such that
\begin{equation}\label{eq:vertex-match}
 (\diver L_h^1r)|_T(a)=r|_T(a)
\end{equation}
for every tetrahedron and vertex incidence $(T,a)$. Additionally, this operator satisfies
\begin{equation}\label{eq:vertex-means}
 \int_T\diver L_h^1r=0 \quad (T\in\T_h), \qquad |L_h^1r|_1\leq C_1(k)\norm r,
\end{equation}
where the stability constant $C_1(k)$ is independent of $h$ and $N$.
\end{proposition}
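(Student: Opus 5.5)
The plan is to assemble $L_h^1r$ as a sum of the supported vertex-star lifts from \cref{lem:vertex-local}, one per mesh vertex:
\begin{equation*}
 L_h^1r=\sum_{a}G_a d_a,\qquad d_a=\bigl(r|_T(a)\bigr)_{T\in\T(a)}.
\end{equation*}

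\emph{Step 1: the data are admissible.} Since $r\in Q_{h,k}$, write $r=\diver w$ with $w\in V_{h,k}$. By \cref{lem:vertex-jets} (valid for $k\geq2$), $d_a$ is the vertex divergence vector of $w$, so $d_a\in\operatorname{im}A_a$. Hence $G_ad_a$ is defined, and it depends linearly on $r$ because $D_a$ is applied to the fixed field $\diver w$. Linearity in $r$ is unaffected by the non-uniqueness of $w$: the vector $d_a$ is read off $r$ directly.

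\emph{Step 2: the properties hold.} Each $G_ad_a$ lies in $S_a$ and extends by zero to $V_{h,3}\subset V_{h,k}$. Fix a pair $(T,b)$ with $b$ a vertex of $T$. Contributions come only from vertices $a$ of $T$, since $T\subset\omega_a$ forces $a\in T$.
\begin{itemize}
\item For $a=b$, the property $D_aG_ad_a=d_a$ gives exactly $r|_T(b)$.
\item For $a\neq b$, the value $(\diver G_ad_a)|_T(b)$ is an entry of $B_aG_ad_a=0$.
\end{itemize}
This proves \eqref{eq:vertex-match}. The element means vanish because $M_aG_ad_a=0$ for every $a$, and tetrahedra outside $\omega_a$ see no contribution.

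\emph{Step 3: stability.} By \eqref{eq:vertex-local-scale},
\begin{equation*}
 |G_ad_a|_{H^1(\omega_a)}^2\leq C_Vh^3|d_a|_{\ell^2}^2.
\end{equation*}
On each $T$, the restriction $r|_T$ is a polynomial of degree $k-1\leq4$. Norm equivalence on $P_{k-1}$ under affine scaling, uniform here since only finitely many reference shapes occur, gives $|r|_T(a)|^2\leq C_k h^{-3}\norm r_{L^2(T)}^2$. Hence $|G_ad_a|_{H^1}^2\leq C\norm r_{L^2(\omega_a)}^2$. Each tetrahedron lies in at most four stars $\omega_a$, so
\begin{equation*}
 |L_h^1r|_1^2\leq 4\sum_a|G_ad_a|_1^2\leq C_1(k)^2\norm r^2.
\end{equation*}
This uses the Cauchy--Schwarz inequality with overlap four, and the resulting constant is independent of $h$ and $N$.

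The main obstacle is the \emph{uniformity} of the constant: bounding $|J_ad|$ independently of the vertex and of $N$. That bound is supplied by \cref{lem:vertex-coverage} together with the finite-state argument inside \cref{lem:vertex-local}. I would check that the translation, permutation, and inversion symmetries preserve $\ker A_a$ and the norms, so that the six reference constants suffice. The hypothesis that $r$ has zero element means is not needed for the construction itself. It is recorded only so that \eqref{eq:vertex-means} keeps the previously established invariant.
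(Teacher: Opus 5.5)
Your proposal is correct and follows the paper's proof essentially step for step: the assembly $L_h^1r=\sum_aG_ad_a$, admissibility of $d_a$ via Lemma~\ref{lem:vertex-jets}, the $D_a$, $B_a$ and $M_a$ properties from Lemma~\ref{lem:vertex-local} for the vertex values and element means, and the $h^3$ scaling bound combined with a fixed-degree inverse estimate and overlap four for stability. Your side remark is also accurate: the zero-mean hypothesis on $r$ is not used by the construction itself and only serves to carry forward the invariant from the previous stage.
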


\begin{proof}
For every vertex $a$, we systematically form $d_a=(r|_T(a))_{T\in\T(a)}$. Since $r=\diver w_h$ for a certain $w_h\in V_{h,k}$, Lemma \ref{lem:vertex-jets} indicates that $d_a\in\operatorname{im}A_a$. For each oriented vertex star, we fix once and for all the operator $G_a$ shown in \eqref{eq:vertex-explicit-operator}, and define the total lift as
\begin{equation}
 L_h^1r=\sum_aG_ad_a.
\end{equation}
On a fixed incidence $(T,a)$, the specific summand indexed by $a$ attains the required value $r|_T(a)$. The other three vertex-star fields supported on $T$ logically contain that exact incidence among their $B$-rows and therefore contribute zero, completely proving \eqref{eq:vertex-match}. Each individual summand has a zero divergence mean on every tetrahedron in its support, seamlessly proving \eqref{eq:vertex-means}.

On each distinct tetrahedron, only the four unique fields associated with its vertices can be nonzero. By combining the fixed-degree inverse estimate and \eqref{eq:vertex-local-scale}, we deduce
\begin{equation}
 |L_h^1r|_1^2 \leq4\sum_a|G_ad_a|_1^2 \leq C h^3\sum_a\sum_{T\ni a}|r|_T(a)|^2 \leq C(k)\sum_T\norm{r}_{L^2(T)}^2.
\end{equation}
This bound firmly establishes \eqref{eq:vertex-means}. Since the specific maps $G_a$ are fixed and all continuous evaluations are linear, the total map $L_h^1$ is definitively linear.
\end{proof}

\begin{remark}[Comparison with Zhang]\label{rem:zhang-vertex-cross-check}
Zhang's Lemma 3.2 in \cite[pp.~673-679, (3.2)-(3.19)]{Zhang2011} treats the same six geometric vertex star sizes and uses cubic local velocities. The specific edge-jet characterization formalized in Lemma \ref{lem:vertex-jets} gives the compatibility space directly, while Lemma \ref{lem:vertex-local} physically realizes it by deploying the unified skeleton-bubble calculus defined in Lemma \ref{lem:skeleton-bubble}.
\end{remark}

\section{Analytic Edge-Star Lifts in Degrees Four and Five}\label{sec:raw-edge}

In this section, we develop an analytic edge-star lift for polynomial degrees four and five. For each geometric edge, we construct a local field whose divergence realizes the prescribed data on that edge while vanishing at every other vertex and edge node. The support of this field is strictly contained within a uniformly bounded enlargement of the edge star. Consequently, these local fields can be assembled globally without requiring a global ordering or shared correction degrees of freedom. 

Let \(e\) be a geometric edge of the mesh \(\T_h\). We define its tetrahedron star as
\begin{equation}\label{eq:edge-star}
 \omega_e=\bigcup\{T\in\T_h:e\subset T\}.
\end{equation}
Furthermore, let \(P_e\) be the union of \(\omega_e\) and every tetrahedron sharing a face with a tetrahedron of \(\omega_e\). Thus, \(P_e\) serves as a one-face-neighbor enlargement. Its volumetric size is bounded independently of the mesh parameters \(h\) and \(N\). The support hierarchy and the macroelement geometry utilized in the subsequent proofs are illustrated schematically in Figure~\ref{fig:edge-geometry}.

\begin{figure}[t]
\centering
\begin{tikzpicture}[font=\small,>=Latex]
  \begin{scope}[xshift=0cm]
    \foreach \a in {0,60,...,300}{
      \pgfmathsetmacro{\b}{\a+48}
      \path[fill=blue!28,draw=blue!55] (0,0)--(\a:1.18)--(\b:1.18)--cycle;
    }
    \fill[red!75!black] (0,0) circle (2.2pt);
    \node[align=center] at (0,-1.62) {(a) $\omega_e$\\edge viewed end-on};
    \node[red!60!black,anchor=west] at (.10,.03) {$e$};
  \end{scope}
  \begin{scope}[xshift=4.25cm]
    \foreach \a in {0,60,...,300}{
      \pgfmathsetmacro{\b}{\a+50}
      \path[fill=gray!16,draw=gray!55]
        (\a:1.75)--(\b:1.75)--(\b:1.10)--(\a:1.10)--cycle;
      \path[fill=blue!28,draw=blue!55] (0,0)--(\a:1.10)--(\b:1.10)--cycle;
    }
    \fill[red!75!black] (0,0) circle (2.2pt);
    \draw[->] (1.35,1.12)--(1.68,1.48)
      node[above right,align=left] {$P_e\setminus\omega_e$\\face-neighbor ring};
    \node[align=center] at (0,-2.05) {(b) supported patch $P_e$};
  \end{scope}
  \begin{scope}[xshift=9.6cm,yshift=-.25cm]
    \path[fill=green!10,draw=green!45!black] (0,0) rectangle (1.45,1.45);
    \path[fill=green!10,draw=green!45!black] (1.45,0) rectangle (2.90,1.45);
    \draw[gray!70] (0,0)--(1.45,1.45) (1.45,0)--(2.90,1.45);
    \fill (0.47,.45) circle (2pt) node[below] {$T_C$};
    \fill (2.42,1.00) circle (2pt) node[above] {$T_{C'}$};
    \draw[very thick,->,orange!80!black] (.65,.67) .. controls (1.25,1.25) and
      (1.70,.25) .. (2.24,.80);
    \node[align=center] at (1.45,-.47) {(c) two-cube macro\\mean transfer};
  \end{scope}
\end{tikzpicture}
\caption{Geometry of the edge construction. Panels (a) and (b) are schematic normal cross-sections. The dark sectors are precisely the tetrahedra incident to $e$, and the light ring denotes tetrahedra sharing a face with that star. Panel (c) shows the two face-adjacent cubes used later to route element means. Each cube contains six Freudenthal tetrahedra. The diagrams indicate only containment and support, while the exact combinatorics are defined in the text and generated from integer coordinates.}\label{fig:edge-geometry}
\end{figure}

We fix the polynomial degree \(k\in\{4,5\}\). On each broken \(P_{k-1}\) edge trace, we employ the equispaced Lagrange nodes. In this context, a skeleton node is defined as either a tetrahedron vertex or a nonvertex node situated in the relative interior of a tetrahedron edge. The tetrahedron incidences are retained separately to ensure continuity. We formally define two finite element spaces for our localized construction:
\begin{equation}\label{eq:edge-spaces}
\begin{aligned}
 X_{e,k}&=\{v\in[C^0P_k(\omega_e)]^3: v=0\text{ on the portions of }\partial\omega_e\cap\partial\Omega\},\\
 Y_{e,k}&=\{v\in[C^0P_k(P_e)]^3:v|_{\partial P_e}=0\}.
\end{aligned}
\end{equation}
The artificial boundary of \(\omega_e\) is left free in \(X_{e,k}\). Hence, the restriction to \(\omega_e\) of every global velocity field naturally belongs to \(X_{e,k}\). In contrast, every member of \(Y_{e,k}\) extends by zero to a globally conforming member of \(V_{h,k}\).

To manipulate the divergence values across these discrete structures, let \(\tau_e v\) denote the vector of all broken divergence values evaluated at the \((k-2)\) nonvertex pressure nodes of \(e\), allocating one copy for every incident tetrahedron. Let \(\gamma_e v\) be the vector of all broken divergence values at the vertices of the tetrahedra residing in \(\omega_e\). We define the admissible source trace space as
\begin{equation}\label{eq:edge-source-space}
 \mathcal D_{e,k}=\tau_e(\ker\gamma_e\cap X_{e,k}).
\end{equation}
Finally, let \(\sigma_e v\) represent the collection of the divergence values at every skeleton node of \(P_e\), strictly excluding the nonvertex nodes carried by \(e\). 

\subsection{Edge-Star Geometry and Finite Coverage}

The analytic construction formulated in Lemma~\ref{lem:edge-star} relies exclusively on seven distinct incidence types and the coordinate-chain classification. To index the exact verification records provided in the supplement and to make their finite-to-global scope fully explicit, we define finer geometrical states.

\begin{definition}[Local states and record levels]\label{def:local-state}
We rescale the domain by $h^{-1}$, ensuring that all mesh vertices possess integer coordinates within $[0,N]^3$. The conservative coverage state $S_N(e)$ is composed of the oriented edge $e$, the vertex sets of all tetrahedra contained in $\omega_e$ and $P_e$, and the comprehensive list of physical coordinate planes $\{x_i=0,N\}$ intersecting $P_e$. States are considered equivalent when an integer translation followed by a coordinate permutation accurately maps every listed object to the corresponding target object, notably without taking quotients by reflections. A source configuration omits $P_e$ and retains only the physical planes meeting $\omega_e$. Finally, a degree record is identified as one concrete triple $(N,e,k)$, constructed without any symmetry quotient.
\end{definition}

\begin{table}[ht]
\centering
\small
\caption{Exact enumeration of local states and degree records across scaled grids.}\label{tab:enumeration}
\begin{tabular}{@{}lrrrrr@{}}
\toprule
level & quotient & $N=3$ & $N=4$ & $N=5$ & union/total\\
\midrule
geometric incidence type & coarse geometry &7&7&7&7\\
source configuration & translation, axis permutation &64&85&85&85\\
conservative coverage state & translation, axis permutation &64&129&165&180\\
geometric edge placement & none &279&604&1115&1998\\
degree record $(N,e,k)$ & none &558&1208&2230&3996\\
\bottomrule
\end{tabular}
\end{table}

These distinct notions describe hierarchal levels of classification. Their exact enumeration is systematically summarized in Table~\ref{tab:enumeration}. Accordingly, the numbers 7, 85, and 3996 refer respectively to coarse geometric incidence types, inequivalent source configurations, and individual degree records. The sum $1766=2(279+604)$ counts the degree records strictly on the $N=3,4$ grids and does not represent a quotient count. For this particular mesh, the one-ring geometry is uniquely determined by the source state. Moreover, whether the exterior boundary of $P_e$ is physical or artificial does not functionally affect $Y_{e,k}$, because its trace vanishes on the entire patch boundary by definition. Omitting this exterior-boundary label therefore yields 85 inequivalent rational matrix problems. Retaining the label produces 180 conservative coverage states, which serves as the finer classification necessary for the finite-to-infinite extension argument detailed below.

\begin{lemma}[Finite coverage of arbitrary grids]\label{lem:finite-coverage}
Every conservative coverage state $S_N(e)$ with $N\geq3$ is rigorously represented by an edge placement on one of the $N=3,4,5$ grids. Consequently, the exact verification of a local identity on all possible edge placements of those limited grids universally verifies that identity for every $N\geq3$.
\end{lemma}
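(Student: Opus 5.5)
The argument is a locality-and-translation argument. Rescale so that mesh vertices lie in $\{0,\dots,N\}^3$. Because the Freudenthal subdivision uses the same orientation in every cube, the set of tetrahedra near any point is invariant under integer translations. Fix an edge $e$ of the $N$-grid. Its patch $P_e$ consists of $\omega_e$ and one face-neighbour layer, so $P_e$ lies in the union of cubes meeting the closed star of $e$. Every such cube contains an endpoint of $e$. Hence, coordinatewise, $P_e$ is contained in a box of the form $[m_i-1,m_i+2]$, where $m_i=\min$ of the $i$th coordinates of the endpoints of $e$. This uses the fact that the endpoints differ by at most one in each coordinate, since edges are subchains of a single cube. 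Therefore, in each axis direction $P_e$ occupies a window of at most three consecutive unit layers.

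The state $S_N(e)$ records only two things: the relative configuration of tetrahedra inside this window, which is translation invariant, and, for each axis $i$, which of the planes $x_i=0$ and $x_i=N$ meet $P_e$. I would handle one axis at a time. Along axis $i$, the relevant information is the position of the window $W_i=[\ell_i,u_i]$ spanned by $P_e$ relative to $\{0,N\}$. There are three cases: the window touches $0$, it touches $N$, or it touches neither. Touching both requires $N\le 3$ and is possible only on the $N=3$ grid when the window spans the whole range, which is itself a placement on $N=3$. Since the window length is at most $3$, the distance from the window to each end of the interval is what matters. I would define a reduced position $\ell_i'$ on an $N'$-grid, $N'\in\{3,4,5\}$, as follows. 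If the window touches neither plane, translate it so that its gaps to $0$ and to $N'$ are both positive. If it touches one plane, keep that gap equal to zero and make the other gap positive. This must be done for all three axes at once with a common $N'$, and this compatibility is the step I expect to be the main obstacle. The choice $N'=5$ always leaves room for a window of length at most $3$ plus a positive gap on each side when the window touches neither plane. When it touches exactly one plane, a gap of $0$ on one side and at least $1$ on the other fits in $N'=4$ or $5$. Axes of the other types then need to be re-embedded in the same $N'$, and each type can be realised in $N'=5$: a length-$3$ window in $[0,5]$ with gaps $(0,2)$, $(2,0)$, or $(1,1)$. So I would take $N'=5$ for every $N\ge5$, and use the identity map for $N=3,4$.

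The realised edge is $e'=e+t$, where $t_i=\ell_i'-\ell_i$. Its coordinates remain in $[0,N']$ because $e\subset P_e$. Translation invariance of the Freudenthal pattern gives $P_{e'}=P_e+t$ as long as the whole patch stays inside the box, which the construction guarantees. The plane-incidence list is preserved axis by axis by the choice of gaps, so $S_{N'}(e')$ is equivalent to $S_N(e)$ under a pure translation; no permutation is needed.

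One subtlety needs checking: $P_e$ computed in $[0,N]^3$ is truncated by the physical boundary, and the truncation pattern must coincide after translation. It does, because truncation is determined exactly by which planes are met, and these agree. For the consequence, observe that any local identity referenced in the lemma is a statement about the finite-dimensional spaces $X_{e,k}$, $Y_{e,k}$ and the functionals $\tau_e,\gamma_e,\sigma_e$. These are defined intrinsically from the tetrahedra of $\omega_e$ and $P_e$ and from the physical planes meeting them, so they are transported by the affine map between equivalent states. Verifying the identity on every placement of the $N=3,4,5$ grids therefore verifies it for all $N\ge3$.
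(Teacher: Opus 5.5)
Your argument is the paper's. You bound the coordinate span of $P_e$ by $3$, note that for $N\ge5$ no axis window can meet both $x_i=0$ and $x_i=N$, and translate each window to $[0,L]$, $[5-L,5]$ or $[1,1+L]$ in the $N=5$ grid by a pure translation, using the identity for $N=3,4$. The one real difference is the span bound: the paper reads it off an enumeration of maximal spans by incidence type, while you derive it from the endpoints of $e$. Your way is shorter and avoids the case table, but the derivation as written contains a false step.

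It is not true that every cube meeting the closed star of $e$ contains an endpoint of $e$. Take an interior coordinate edge $e=[p,p+e_1]$.
\begin{itemize}
\item The star tetrahedron $[p,p+e_1,p+e_1+e_2,p+(1,1,1)]$ meets the cube $p+[1,2]^3$.
\item The star tetrahedron $[p-e_2-e_3,\,p-e_3,\,p,\,p+e_1]$ meets the cube $p+[-1,0]\times[-2,-1]^2$.
\end{itemize}
Neither cube contains $p$ or $p+e_1$. So the union of cubes meeting the star has span $4$ in the $x_2$ and $x_3$ directions, and that route alone would not fit a window touching neither plane into $[0,5]$ with positive gaps on both sides.

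The correct reason is that each tetrahedron of $P_e$ itself contains an endpoint of $e$. A face-neighbour shares a face with a star tetrahedron, and that face omits only one of its four vertices, so it keeps $a$ or $b$. The host cube of the neighbour therefore contains an endpoint, and your window $[m_i-1,m_i+2]$ follows.

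The same observation also justifies your truncation claim, which you only assert. A tetrahedron of the untruncated lattice patch that crosses $x_i=0$ (resp.\ $x_i=N$) contains an endpoint of $e$ lying on that plane. So the plane is recorded in $S_N(e)$, and your translation keeps that side of the window on the corresponding boundary plane of the $N=5$ grid. Hence such a tetrahedron stays excluded after translation, and $P_{e'}=P_e+t$ holds with equality rather than mere inclusion. The paper's proof does not spell out this reverse inclusion either.
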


\begin{proof}
Let $a_i\leq b_i$ be the minimum and maximum $i$-th integer coordinates of the vertices contained in $P_e$. We first establish the uniform spatial bound
\begin{equation}\label{eq:patch-span}
 b_i-a_i\leq3\qquad(i=1,2,3).
\end{equation}
Direct analytical enumeration confirms this sharp bound. The entries detailed in Table~\ref{tab:maximal-span} follow directly from the mathematical definition of the mesh. 

\begin{table}[ht]
\centering
\small
\caption{Maximal spans of edge patches and tetrahedra by geometric incidence type.}\label{tab:maximal-span}
\begin{tabular}{@{}lccc@{}}
\toprule
edge incidence type & maximal span of $\omega_e$ & maximal span of $P_e$
& $\min$--$\max\ |P_e|$\\
\midrule
one-tetrahedron box edge &(1,1,1)&(1,1,1)&3\\
two-tetrahedron box edge &(1,1,1)&(1,1,2)&4--6\\
boundary-face diagonal &(1,1,1)&(1,2,2)&4--6\\
boundary-face axis &(1,1,2)&(1,2,2)&7--9\\
interior face diagonal &(1,1,2)&(2,3,3)&10--12\\
interior coordinate edge &(1,2,2)&(2,2,3)&16--18\\
cube body diagonal &(1,1,1)&(3,3,3)&12--18\\
\bottomrule
\end{tabular}
\end{table}

We can write a generic tetrahedron as the four-vertex sequential chain
\begin{equation}
 K(c,\sigma)= \{c, c+e_{\sigma(1)}, c+e_{\sigma(1)}+e_{\sigma(2)}, c+\one\}, \qquad \sigma\in S_3.
\end{equation}
For the specific coordinate increments $(1,0,0)$, $(1,1,0)$, and $(1,1,1)$, we logically list the six permutations of $\sigma$ whose chain encompasses the two desired endpoints. These yield respectively the coordinate-edge, face-diagonal, and body-diagonal star rows in our classification. For each recorded chain, we sequentially delete each of its four vertices. The unique second tetrahedron attached to the resulting face, provided it exists, forms the topological one-ring. By extracting the coordinate minima and maxima from this construction, we obtain the last two columns of Table~\ref{tab:maximal-span}. No further cases exist, as every valid tetrahedron corresponds to one of the six chains and every interior triangular face corresponds to one of the four vertex deletions. Intersecting these three distinct interior lists with one or two box boundary planes yields precisely the first four boundary rows and geometrically can only decrease the coordinate span. In particular, the final column conclusively proves the bound in \eqref{eq:patch-span}. In the critical body-diagonal case, the edge star occupies the central cube $c$, while its exterior face neighbors occupy the six adjacent cubes $c\pm e_i$. Their combined union therefore dictates a maximal span of $(3,3,3)$.

For grid dimensions $N=3,4$, the specific state naturally occurs in the corresponding enumerated grid. Now suppose $N\geq5$. By \eqref{eq:patch-span}, the interval $[a_i,b_i]$ cannot intersect both $0$ and $N$ simultaneously. We therefore translate the $i$-th coordinate interval using the mapping
\begin{equation}
 [a_i,b_i]\longmapsto
 \begin{cases}
 [0,b_i-a_i],&a_i=0,\\
 [5-(b_i-a_i),5],&b_i=N,\\
 [1,1+(b_i-a_i)],&0<a_i\leq b_i<N.
 \end{cases}
\end{equation}
This deterministic image lies strictly within $[0,5]$, and it intersects the low boundary plane, the high boundary plane, or neither plane exactly when the original given interval does. Applying these three integer translations simultaneously maps $e$, every star tetrahedron, and every one-ring tetrahedron efficiently into the $N=5$ grid. Integer translations structurally preserve the coordinate-order Freudenthal subdivision. Furthermore, this mapping preserves precisely the physical boundary planes recorded in $S_N(e)$, and hence preserves all source nodes deleted by the homogeneous boundary conditions. The space $Y_{e,k}$ remains zero on the entire patch exterior in both realizations. The local incidence matrices are therefore completely identical up to row, column, and vector-coordinate permutations, which finalizes the proof.
\end{proof}

\begin{lemma}[Supported edge-star lift]\label{lem:edge-star}
For every geometric Freudenthal edge \(e\) and for polynomial degrees \(k=4,5\), the localized trace mapping is surjective such that
\begin{equation}\label{eq:edge-star-surj}
 \tau_e(\ker\sigma_e\cap Y_{e,k})=\mathcal D_{e,k}.
\end{equation}
There exists a linear right inverse $G_{e,k}$ defined on this space. On an \(h\)-scaled patch, this inverse operator satisfies the stability bound
\begin{equation}\label{eq:edge-star-scale}
 |G_{e,k}d|_{H^1(P_e)}^2\leq C_E(k)h^3|d|_{\ell^2}^2, \qquad d\in\mathcal D_{e,k},
\end{equation}
where the stability constant is independent of the choice of \(e\), \(h\), and \(N\).
\end{lemma}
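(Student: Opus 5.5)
We will prove the surjectivity \eqref{eq:edge-star-surj} by proving the two inclusions separately, then get $G_{e,k}$ and \eqref{eq:edge-star-scale} from finite coverage and scaling.

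\textbf{Easy inclusion.} Every $v\in Y_{e,k}$ restricts to a member of $X_{e,k}$. Indeed, $v$ vanishes on $\partial P_e$, and the physical boundary portions of $\partial\omega_e$ lie in $\partial P_e$. Moreover, $\sigma_e$ contains every vertex row of $\gamma_e$. Hence $\ker\sigma_e\cap Y_{e,k}$ restricts into $\ker\gamma_e\cap X_{e,k}$, and $\tau_e(\ker\sigma_e\cap Y_{e,k})\subset\mathcal D_{e,k}$.

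\textbf{Hard inclusion.} First I would characterize $\mathcal D_{e,k}$ by edge jets, in the spirit of Lemma \ref{lem:vertex-jets}. At an interior node $p$ of $e$, the broken divergence on $T\supset e$ equals $\sum_j\partial_j w_j|_T(p)$. The tangential derivative along $e$ is single-valued. Each normal derivative splits into contributions carried by the faces of $T$ through $e$, namely the directional derivative toward the opposite vertex restricted to the face. These face-normal jets along $e$ are single-valued polynomials of degree $k-1$ on $e$, and vanishing at the endpoints of $e$ is forced by $\gamma_e v=0$ together with the vertex-derivative structure.

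Then I would realize each such jet with skeleton bubbles from Lemma \ref{lem:skeleton-bubble}.
\begin{enumerate}
\item \emph{Middle and endpoint edge modes.} Take fields $\lambda_a^{\alpha}\lambda_b^{\beta}\lambda_c\,s$ for a face $[a,b,c]\supset e$, with exponents chosen so that only the prescribed trace survives on $e$. These give face-normal jets $\lambda_a^\alpha\lambda_b^\beta$ along $e$. The exponents are limited by $\alpha+\beta+1\le k$, and tangential modes come from $\lambda_a^\alpha\lambda_b^\beta s$.
\item \emph{Vanishing at vertices.} The exponents $\alpha,\beta\ge2$ (or a zero factor at the vertex) make the divergence vanish at all tetrahedron vertices.
\item \emph{Vanishing at the other edge nodes.} By the last assertion of Lemma \ref{lem:skeleton-bubble}, a summand survives on a skeleton edge only if its remaining factors are carried there. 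Edges other than $e$ in $\omega_e$ carry at most one of $\lambda_a,\lambda_b$ together with $\lambda_c$, so the traces there vanish whenever the exponent of the missing factor is $\ge2$ or the gradient term is killed.
\item \emph{Leakage.} Where degree $k=4,5$ forces an exponent to be $1$, divergence leaks onto edges $[a,c]$ or $[b,c]$. I would cancel this leakage with face transfers. These are face bubbles $\lambda_a\lambda_b\lambda_c$ times polynomial weights on faces through $e$, together with bubbles supported on one-ring tetrahedra of $P_e$; this is why $Y_{e,k}$ is posed on $P_e$ rather than on $\omega_e$.
\end{enumerate}

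\textbf{Finite verification, the right inverse, and scaling.} Because the combinatorics reduce to the seven incidence types and finitely many coverage states (Lemma \ref{lem:finite-coverage}), I would close the remaining linear-algebra identity by exact rational computation on the 85 source configurations and 180 coverage states. That is, on each state one checks
\[
\operatorname{rank}\bigl(\tau_e|_{\ker\sigma_e\cap Y_{e,k}}\bigr)=\dim\mathcal D_{e,k},
\]
computing both sides on the unit-scaled patch. Given equality, $G_{e,k}$ is the minimum-norm right inverse, defined for example through the pseudoinverse on the orthogonal complement of the kernel. It is fixed per state and transported by the translation and permutation equivalences. For the scaling, the divergence scales as $h^{-1}$ and $|\cdot|_{H^1}^2$ scales as $h$ in three dimensions, which gives $|G_{e,k}d|_{H^1(P_e)}^2\le Ch^{3}|d|^2$. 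The constant $C_E(k)$ is the maximum over the finitely many states, and it does not depend on $e$, $h$ or $N$ by Lemma \ref{lem:finite-coverage}.

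\textbf{Main obstacle.} I expect the hardest step to be the surjectivity for the cube body diagonal and the interior face diagonal at $k=4$. There the polynomial room is smallest: $k-2=2$ edge nodes must be matched with only quartic bubbles. Leakage onto neighbouring edges also cannot be removed inside $\omega_e$ and must be routed through the one-ring. If the explicit barycentric construction leaves a residual, I would first try adding the one-ring face bubbles systematically and settle the rank by the exact computation, since that computation determines the result anyway.
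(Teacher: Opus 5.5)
Your reduction is sound and gives a valid proof, but a computer-assisted one, so it takes a genuinely different route from the paper's.

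\textbf{What agrees.} The easy inclusion and the $h^3$ scaling match the paper.

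\textbf{Your route.} For the hard inclusion you certify $\dim\tau_e(\ker\sigma_e\cap Y_{e,k})=\dim\mathcal D_{e,k}$ by exact rank computations on representatives. You then transport the result with Lemma~\ref{lem:finite-coverage}. That lemma covers only $N\ge3$, so the grids $N=1,2$ would have to be added to the computation. The paper runs exactly these rank checks in \eqref{eq:source-lift-ranks}, but only as a cross-check.

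\textbf{The paper's route.} Its proof of the hard inclusion is analytic, and your bubble sketch misses the device that makes it short.
\begin{itemize}
\item A gradient-jump argument identifies the pointwise source space explicitly. $\mathcal C_e$ is zero, an equal-pair line, the checkerboard hyperplane \eqref{eq:edge-checkerboard}, or all of $\R^{n_e}$. This gives $\mathcal D_{e,k}\subset\mathcal C_e\otimes\lambda_a\lambda_bP_{k-3}(e)$. Note that it is the divergence trace, not the jets themselves, that vanishes at $a$ and $b$.
\item The spill of $\lambda_a^{k-2}\lambda_b\lambda_c z$ onto $[a,c]$ is not cancelled afterwards by further face or one-ring bubbles, as in your leakage step. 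It is removed at the source by taking $z$ with $z\cdot\nabla\lambda_{T_c^\pm,b}=0$, as in \eqref{eq:spill-annihilation}. This leaves the clean pattern $z\cdot\nabla\lambda_{T,c}$ on the two tetrahedra at $F_c=[a,b,c]$.
\item These patterns span $\mathcal C_e$ at both endpoints for every incidence type except one: the endpoint $000$ of the two-tetrahedron box edge. There the constraint forces $z\parallel\mathbf s$, so only the equal-pair pattern survives, and one inward one-ring face $[a,u,c]$ is borrowed.
\item The quintic middle mode $\lambda_a^2\lambda_b^2\lambda_c z$ has no spill at all.
\end{itemize}

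\textbf{Your expected obstacle is misplaced.} The cube and face diagonals are handled entirely inside $\omega_e$. The patch $P_e$ is needed only for that one box edge.

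\textbf{What each route buys.} The analytic route gives an explicit right inverse and an explicit description of $\mathcal D_{e,k}$. It uses only the seven incidence types and explains why the ranks agree. Your route spares you from finding the construction. In exchange, the lemma depends on actually running the exact computation over all $180$ coverage states.
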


\begin{proof}
We introduce the intermediate subspace
\begin{equation}
 \mathcal I_{e,k}=\tau_e(\ker\sigma_e\cap Y_{e,k}).
\end{equation}
Direct restriction from the extended patch $P_e$ to the core star $\omega_e$ guarantees the inclusion
\begin{equation}\label{eq:edge-inclusion}
 \mathcal I_{e,k}\subset\mathcal D_{e,k}.
\end{equation}
This relation holds because a supported velocity field can be extended by zero, and every broken vertex condition is inherently contained among the rows defining $\sigma_e$.

There are exactly seven geometric edge incidence types. We consistently orient the chosen edge from its coordinatewise smaller endpoint $a$ to its larger endpoint $b$. Its scaled coordinate increment is, up to permutation, strictly $(1,0,0)$, $(1,1,0)$, or $(1,1,1)$. Indeed, two vertices of a Freudenthal coordinate chain must differ by the sum of one, two, or three distinct coordinate basis vectors. A body diagonal belongs to the six tetrahedra comprising one unit cube. A face diagonal possesses four incident tetrahedra when its grid-square face is strictly interior and exactly two when that face aligns with the physical boundary. A coordinate edge generates six internal sectors in the bulk interior, three sectors on one physical face, and either one or two sectors at the geometric intersection of two physical faces, contingent on the orientation of the surrounding chains. These possibilities are mutually disjoint and exhaustive. The respective dimensions and topological properties are classified in Table~\ref{tab:edge-incidence-classification}. A source relation listed in the final column is enforced pointwise along the respective edge.

\begin{table}[ht]
\centering
\small
\caption{Edge incidence types, geometric locations, and source space dimensions.}\label{tab:edge-incidence-classification}
\begin{tabularx}{\textwidth}{@{}lclrr>{\raggedright\arraybackslash}X@{}}
\toprule
edge type & increment & location & $n_e$ & $\dim\mathcal D_{e,4}/
\dim\mathcal D_{e,5}$ & source relation\\
\midrule
one-tetrahedron box edge &(1,0,0)&two box faces, one sector&1&0/0&zero\\
two-tetrahedron box edge &(1,0,0)&two box faces, two sectors&2&4/6&none\\
boundary-face diagonal &(1,1,0)&one box face&2&2/3&equal pair\\
boundary-face axis &(1,0,0)&one box face&3&6/9&none\\
interior face diagonal &(1,1,0)&interior&4&6/9&alternating sum\\
interior coordinate edge &(1,0,0)&interior&6&12/18&none\\
cube body diagonal &(1,1,1)&one cube&6&12/18&none\\
\bottomrule
\end{tabularx}
\end{table}

Let $\mathcal C_e$ define the pointwise incidence space specified in the final column of Table~\ref{tab:edge-incidence-classification}, corresponding to the zero vector, the equal-pair line, the checkerboard hyperplane, or the full unconstrained space. The three physical restrictions follow directly from finite element conformity. On a one-tetrahedron box edge, each velocity vector component contains the equations of the two bounding physical faces as polynomial factors. Consequently, its full gradient completely vanishes on their linear intersection. For a boundary-face diagonal, we can appropriately choose coordinates so that the physical face lies on the plane $\{x_1=0\}$. The two corresponding tangential derivatives must vanish. Thus, on either incident tetrahedron, the velocity gradient rigorously assumes the rank-one form $z^\pm\otimes e_1$. The normal direction $e_1$ is tangent to the interior face shared by the two adjacent tetrahedra. Imposing functional continuity across that internal face necessitates $z^+=z^-$. Taking the trace of this gradient therefore forces equal pointwise divergence values.

For an interior face diagonal, the four adjacent tetrahedra act as the sectors cut out by two distinct face planes possessing independent normals $n_1, n_2$. If $G_{\epsilon_1\epsilon_2}$ characterizes the velocity gradient inside a particular sector, the structured summation
\begin{equation}
 G_{++}-G_{-+}-G_{+-}+G_{--}
\end{equation}
is simultaneously expressible in the forms $x\otimes n_1$ and $y\otimes n_2$. Gradient jumps evaluated across any face mathematically annihilate all tangential vector directions. Since the normals $n_1$ and $n_2$ are linearly independent, this matrix sum evaluates identically to zero. Taking its trace directly produces the pointwise relation along the edge,
\begin{equation}\label{eq:edge-checkerboard}
 q_{++}-q_{-+}-q_{+-}+q_{--}=0.
\end{equation}
Thus we confirm the structural containment
\begin{equation}\label{eq:edge-source-contained}
 \mathcal D_{e,k}\subset \mathcal C_e\otimes\lambda_a\lambda_bP_{k-3}(e).
\end{equation}

For subsequent verification of the explicit nodal patterns, we systematically record the barycentric gradients of a standard chain tetrahedron. Using unit lattice coordinates shifted to $\xi=x-c$, the barycentric coordinates of $K(c,\sigma)$ presented in the defined vertex order are exactly
\begin{equation}\label{eq:chain-barycentric}
 \lambda_0=1-\xi_{\sigma(1)},\quad \lambda_1=\xi_{\sigma(1)}-\xi_{\sigma(2)},\quad \lambda_2=\xi_{\sigma(2)}-\xi_{\sigma(3)},\quad \lambda_3=\xi_{\sigma(3)}.
\end{equation}
Consequently, their constant geometric gradients are respectively evaluated as
\begin{equation}\label{eq:chain-gradients}
 -e_{\sigma(1)},\quad e_{\sigma(1)}-e_{\sigma(2)},\quad e_{\sigma(2)}-e_{\sigma(3)},\quad e_{\sigma(3)}.
\end{equation}

We mathematically prove the converse statement by utilizing interior face bubbles. If $F=[a,u,v]$ denotes an interior face, we explicitly define the localized vector field
\begin{equation}\label{eq:endpoint-face-bubble}
 \Psi_{F,a}^{(k)}(z)|_T =\lambda_{T,a}^{k-2}\lambda_{T,u}\lambda_{T,v}z \quad(T\supset F), \qquad \Psi_{F,a}^{(k)}(z)=0\quad\text{otherwise}.
\end{equation}
The two contiguous traces perfectly agree on $F$, and the field definitively vanishes on all other boundary faces of its two-tetrahedron support. Evaluated on the two constituent face edges issuing from the point $a$, we compute
\begin{equation}\label{eq:endpoint-bubble-first}
 \diver\Psi_{F,a}^{(k)}(z)|_T|_{[a,u]} =\lambda_a^{k-2}\lambda_u\,z\cdot\nabla\lambda_{T,v},
\end{equation}
\begin{equation}\label{eq:endpoint-bubble-second}
 \diver\Psi_{F,a}^{(k)}(z)|_T|_{[a,v]} =\lambda_a^{k-2}\lambda_v\,z\cdot\nabla\lambda_{T,u}.
\end{equation}
Its divergence trace strictly vanishes on every other physical mesh edge. These foundational identities emerge immediately from the standard product rule.

For a selected target face $F_c=[a,b,c]$, we designate $T_c^\pm$ as its two adjoining tetrahedra. If the chosen vector $z$ structurally enforces
\begin{equation}\label{eq:spill-annihilation}
 z\cdot\nabla\lambda_{T_c^+,b} =z\cdot\nabla\lambda_{T_c^-,b}=0,
\end{equation}
then the derivative response in \eqref{eq:endpoint-bubble-second} identically vanishes on the spill edge $[a,c]$. The remaining active target incidence vector is formalized as
\begin{equation}\label{eq:target-pattern}
 p(c,z)_T=
 \begin{cases}
 z\cdot\nabla\lambda_{T,c},&T=T_c^+\text{ or }T_c^-,\\
 0,&\text{otherwise}.
 \end{cases}
\end{equation}
The resulting mapping operator $(c,z)\mapsto p(c,z)$ is perfectly independent of the degree $k$.

Table~\ref{tab:spanning-patterns} explicitly lists the spanning patterns for the six nonzero incidence types. In this concise notation, coordinates are abbreviated by three binary digits, the standard physical coordinate vectors are denoted by $e_x,e_y,e_z$, $e_i$ in a pattern represents the $i$-th incidence coordinate vector, and the summation vector is $\mathbf s=e_x+e_y+e_z$. The entries of \(p\) follow the tetrahedron ordering in Table~\ref{tab:incident-tetrahedra}. An entry formulated as $(c;z)\mapsto p$ concisely denotes the evaluation of \eqref{eq:endpoint-face-bubble} on the specific target face $F_c=[a,b,c]$.

\begin{table}[ht]
\centering
\scriptsize
\setlength{\tabcolsep}{3pt}
\caption{Spanning target patterns for the six nonzero incidence configurations.}\label{tab:spanning-patterns}
\begin{tabularx}{\textwidth}{@{}lclX@{}}
\toprule
type & endpoint & $e$ & spanning patterns $(c;z)\mapsto p$\\
\midrule
boundary-face axis
&$a$&$(001,011)$&
$(111;e_x+e_y)\mapsto(1,1,0)$;
$(111;e_z)\mapsto(0,-1,0)$;
$(112;\mathbf s)\mapsto(0,1,1)$\\
&$b$&&
$(111;e_x)\mapsto(1,1,0)$;
$(112;e_x)\mapsto(0,0,1)$;
$(112;e_z)\mapsto(0,1,0)$\\[1mm]
boundary-face diagonal
&$a$&$(000,011)$&$(111;\mathbf s)\mapsto(1,1)$\\
&$b$&&$(111;e_x)\mapsto(1,1)$\\[1mm]
cube diagonal
&$a$&$(000,111)$&
$(011;e_y)\mapsto e_5$; $(011;e_z)\mapsto e_3$;
$(101;e_x)\mapsto e_4$; $(101;e_z)\mapsto e_1$;
$(110;e_x)\mapsto e_2$; $(110;e_y)\mapsto e_0$\\
&$b$&&
$(001;e_x)\mapsto-e_4$; $(001;e_y)\mapsto-e_5$;
$(010;e_x)\mapsto-e_2$; $(010;e_z)\mapsto-e_3$;
$(100;e_y)\mapsto-e_0$; $(100;e_z)\mapsto-e_1$\\[1mm]
interior axis
&$a$&$(011,111)$&
$(000;e_y)\mapsto-e_0$; $(000;e_z)\mapsto-e_1$;
$(112;e_y)\mapsto-e_5$; $(112;e_x+e_z)\mapsto e_2+e_5$;
$(121;e_z)\mapsto-e_4$; $(121;e_x+e_y)\mapsto e_3+e_4$\\
&$b$&&
$(001;e_z)\mapsto e_1$; $(001;e_x+e_y)\mapsto-e_1-e_2$;
$(010;e_y)\mapsto e_0$; $(010;e_x+e_z)\mapsto-e_0-e_3$;
$(122;e_y)\mapsto e_5$; $(122;e_z)\mapsto e_4$\\[1mm]
interior face diagonal
&$a$&$(001,111)$&
$(000;e_z)\mapsto(-1,-1,0,0)$;
$(011;e_y)\mapsto(0,1,0,1)$;
$(101;e_x)\mapsto(1,0,1,0)$\\
&$b$&&
$(000;\mathbf s)\mapsto(-1,-1,0,0)$;
$(011;e_x)\mapsto(0,-1,0,-1)$;
$(101;e_y)\mapsto(-1,0,-1,0)$\\[1mm]
two-tetrahedron box edge
&$b$&$(000,001)$&
$(111;e_x)\mapsto(0,1)$;
$(111;e_y)\mapsto(1,0)$\\
\bottomrule
\end{tabularx}
\end{table}

\begin{table}[ht]
\centering
\scriptsize
\caption{Ordering of the tetrahedra incident to the six nonzero edge types.}
\label{tab:incident-tetrahedra}
\begin{tabularx}{\textwidth}{@{}lclX@{}}
\toprule
type & $e$ & $n_e$ & ordered pairs $T_i\setminus e$\\
\midrule
boundary-face axis&$(001,011)$&3&
$(000,111),(111,112),(012,112)$\\
boundary-face diagonal&$(000,011)$&2&
$(010,111),(001,111)$\\
cube diagonal&$(000,111)$&6&
$(100,110),(100,101),(010,110),(010,011),(001,101),(001,011)$\\
interior axis&$(011,111)$&6&
$(000,010),(000,001),(001,112),(010,121),(121,122),(112,122)$\\
interior face diagonal&$(001,111)$&4&
$(000,101),(000,011),(101,112),(011,112)$\\
two-tetrahedron box edge&$(000,001)$&2&
$(101,111),(011,111)$\\
\bottomrule
\end{tabularx}
\end{table}

Equations \eqref{eq:chain-barycentric} through \eqref{eq:chain-gradients} demonstrate directly that every vector $z$ listed in Table~\ref{tab:spanning-patterns} robustly satisfies the annihilation constraint in \eqref{eq:spill-annihilation} and precisely yields the stated output pattern. For example, regarding the boundary-face diagonal $e=(000,011)$, the two connected tetrahedra are $[000,010,011,111]$ and $[000,001,011,111]$. Evaluated at the endpoint $000$, setting $c=111$ and $z=\mathbf s$ entirely annihilates the local gradients of $\lambda_{011}$ in both corresponding tetrahedra, while confirming $z\cdot\nabla\lambda_{111}=1$ on each element. This analytical process robustly returns the equal-pair pattern $(1,1)$. The identical structural calculation applies for the endpoint $011$ using the vector $z=e_x$. The cube-diagonal patterns yield signed coordinate unit vectors. The interior-axis patterns organically form three paired triangular blocks. The boundary-face-diagonal pattern successfully spans the complete equal-pair line. Furthermore, the three interior-face-diagonal patterns are mathematically independent and fully span the hyperplane defined by \eqref{eq:edge-checkerboard}. Hence, this verified configuration accurately spans $\mathcal C_e$ at both geometrical endpoints, with a single notable exception.

To address the missing structural endpoint $a=000$ of the two-tetrahedron box edge, we formally set $b=001$, $c=111$, and $u=011$. On the two corresponding indicated face patches, we construct the specialized formulas
\begin{equation}\label{eq:two-tet-borrow-zero}
 \Xi_0^{(k)} =\lambda_a^{k-2}\lambda_b\lambda_c e_y +\lambda_a^{k-2}\lambda_u\lambda_c e_y,
\end{equation}
\begin{equation}\label{eq:two-tet-borrow-one}
 \Xi_1^{(k)} =\lambda_a^{k-2}\lambda_b\lambda_c(e_x+e_z) -\lambda_a^{k-2}\lambda_u\lambda_c e_y.
\end{equation}
The first summand in each explicit line is strictly supported on the triangle $[a,b,c]$, while the second is supported on $[a,u,c]$. The latter operates as an inward interior face of the encompassing one-ring $P_e$. Applying equations \eqref{eq:endpoint-bubble-first} through \eqref{eq:endpoint-bubble-second} rigorously yields the target trace patterns $(1,0)$ and $(0,1)$, respectively, completely maintaining a zero divergence trace on every other continuous skeleton edge.

We now lift the polynomial divergence trace. Because it strictly vanishes at both $a$ and $b$, it admits the unique uncoupled factorization
\begin{equation}\label{eq:edge-mode-decomposition}
\begin{aligned}
k=4:\quad & q_T=\alpha_T\lambda_a^2\lambda_b +\beta_T\lambda_a\lambda_b^2,\\
k=5:\quad & q_T=\alpha_T\lambda_a^3\lambda_b +\mu_T\lambda_a^2\lambda_b^2 +\beta_T\lambda_a\lambda_b^3.
\end{aligned}
\end{equation}
By the containment property established in \eqref{eq:edge-source-contained}, each vector coefficient inherently belongs to the space $\mathcal C_e$. We sequentially expand the coefficients $\alpha$ and $\beta$ in the corresponding valid rows of the spanning pattern table and logically sum the resulting endpoint bubbles, explicitly incorporating \eqref{eq:two-tet-borrow-zero} and \eqref{eq:two-tet-borrow-one} when necessary. This precise protocol fully realizes the two boundary endpoint modes and introduces no other parasitic skeleton traces.

To isolate the middle quintic mode when $k=5$, we utilize the following localized function on the identical target face:
\begin{equation}\label{eq:middle-face-bubble}
 \Theta_{F_c}(z)|_T =\lambda_{T,a}^2\lambda_{T,b}^2\lambda_{T,c}z \quad(T\supset F_c),
\end{equation}
which is seamlessly extended by zero. Its spatial divergence perfectly vanishes on every edge except $e$, where it evaluates to
\begin{equation}
 \diver\Theta_{F_c}(z)|_T|_e =\lambda_a^2\lambda_b^2z\cdot\nabla\lambda_{T,c}.
\end{equation}
Thus, any target-face spanning row reliably realizes the parameter $\mu$. In the isolated two-tetrahedron case, one securely uses the verified endpoint-$b$ row. We have therefore proved that
\begin{equation}
 \mathcal C_e\otimes\lambda_a\lambda_bP_{k-3}(e) \subset\mathcal I_{e,k}.
\end{equation}
Coupled with \eqref{eq:edge-inclusion} and \eqref{eq:edge-source-contained}, this comprehensively proves the surjectivity asserted in \eqref{eq:edge-star-surj}.

The enumerated coordinate-chain list also verifies geometric coverage for arbitrary values of $N$. Every physical edge star is securely mapped to one of the canonical displayed stars by applying an integer translation, a coordinate permutation, and possibly a simultaneous coordinate reversal. All constructed bubbles, except for those in \eqref{eq:two-tet-borrow-zero} and \eqref{eq:two-tet-borrow-one}, utilize exclusively interior faces containing $e$ and are therefore totally unaffected by any further physical clipping of $P_e$. In the single exceptional case, the chain list affirmatively identifies $[a,u,c]$ as an inward face sharing a valid star tetrahedron, ensuring its second tetrahedron lies strictly within $P_e$. Thus, no additional patch geometry is dynamically required, and the exact formulas apply uniformly on every global grid.

Finally, conversion between the nodal values $\tau_ev$ and the coefficients in \eqref{eq:edge-mode-decomposition} is uniformly bounded for fixed $k$. The displayed spanning families define fixed linear right inverses on the seven normalized patches. Under $x=x_e+hQ\widehat x$, each reference coefficient vector is multiplied by $hQ$, where $Q$ is the signed coordinate permutation corresponding to the coordinate permutation and possible central inversion. Since $Q$ is orthogonal, the divergence values are preserved and the squared $H^1$ seminorm acquires the factor $h^3$. Taking the maximum over the seven families proves \eqref{eq:edge-star-scale}.
\end{proof}

\begin{lemma}[Global edge lift before mean adjustment]\label{lem:raw-edge}
Let \(k=4\) or \(5\), and suppose the pressure field \(r\in Q_{h,k}\) vanishes identically at every broken geometric vertex. There exist compactly supported fields \(u_e\in V_{h,k}\), specifically allocated one for each geometric edge and depending linearly on \(r\), such that the conditions
\begin{equation}\label{eq:off-target}
 (\diver u_e)|_T(a)=0 \quad\text{at every skeleton node not carried by \(e\)},
\end{equation}
\begin{equation}\label{eq:on-target}
 (\diver u_e)|_T(a)=r|_T(a) \quad\text{at every nonvertex node \(a\) of \(e\)},
\end{equation}
are precisely met. Additionally, the accumulated global fields satisfy the stability bound
\begin{equation}\label{eq:raw-square-sum}
 \sum_e|u_e|_1^2\leq C_{\mathrm{raw}}(k)^2\norm r^2.
\end{equation}
The geometrical edge patches exhibit a uniformly bounded spatial overlap. Therefore, the global assembly \(u=\sum_eu_e\) successfully matches \(r\) identically on every continuous tetrahedron edge.
\end{lemma}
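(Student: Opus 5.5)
The plan is to apply Lemma~\ref{lem:edge-star} edge by edge and then sum. Fix a geometric edge $e$ and form the data vector $d_e=\tau_e r$, meaning the broken values of $r$ at the $k-2$ nonvertex nodes of $e$, one copy for each incident tetrahedron.

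\textbf{Step 1: $d_e$ is admissible.} Since $r\in Q_{h,k}$, we may write $r=\diver w_h$ with $w_h\in V_{h,k}$. The restriction $w_h|_{\omega_e}$ lies in $X_{e,k}$, because the artificial boundary is free there and the physical boundary carries zero trace. Because $r$ vanishes at every broken vertex, we have $\gamma_e(w_h|_{\omega_e})=0$. Hence $d_e=\tau_e(w_h|_{\omega_e})\in\mathcal D_{e,k}$ by the definition \eqref{eq:edge-source-space}.

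\textbf{Step 2: the local lift.} Set $u_e=G_{e,k}d_e$, extended by zero outside $P_e$. This extension is admissible because $Y_{e,k}$ vanishes on $\partial P_e$, so $u_e\in V_{h,k}$; linearity in $r$ is immediate. Property \eqref{eq:on-target} is $\tau_e u_e=d_e$. For \eqref{eq:off-target}, the condition $\sigma_e u_e=0$ gives vanishing at every skeleton node of $P_e$ other than the nonvertex nodes of $e$. At a skeleton node outside $P_e$, every incident tetrahedron lies outside $\operatorname{supp}u_e$. Skeleton nodes on $\partial P_e$ counted through exterior tetrahedra see the zero field, and their incidences through tetrahedra of $P_e$ are covered by $\sigma_e$, since $\sigma_e$ records all broken values at skeleton nodes of $P_e$.

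\textbf{Step 3: stability.} By \eqref{eq:edge-star-scale},
\begin{equation*}
 |u_e|_1^2\leq C_E(k)h^3|d_e|_{\ell^2}^2 .
\end{equation*}
The fixed-degree inverse (norm-equivalence) estimate on the shape-regular reference family gives $h^3|r|_T(x)|^2\leq C\norm r_{L^2(T)}^2$ for every node $x$ of $T$. Each tetrahedron contains six edges, so it contributes to at most six data vectors. Summing over $e$ therefore yields \eqref{eq:raw-square-sum}, with $C_{\mathrm{raw}}(k)^2=6\,C\,C_E(k)$.

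\textbf{Step 4: assembly.} Overlap is bounded because each $P_e$ is a fixed enlargement, and a tetrahedron meets only boundedly many such patches by Table~\ref{tab:maximal-span}. Consequently $u=\sum_e u_e$ is well defined, and $|u|_1^2\leq C\sum_e|u_e|_1^2$. At a nonvertex node of an edge $e'$, only $u_{e'}$ contributes, by \eqref{eq:off-target}, and it contributes $r$ by \eqref{eq:on-target}. At vertices every $u_e$ gives zero, which equals $r$ there by hypothesis. Thus $\diver u$ agrees with $r$ at all $k$ equispaced nodes of each broken edge trace (both endpoints and the $k-2$ interior nodes), and the traces are polynomials of degree $k-1$. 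Hence they coincide on every tetrahedron edge.

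The main obstacle is the bookkeeping in Step 2: one must confirm that "skeleton node not carried by $e$" matches exactly the rows of $\sigma_e$, including broken incidences through tetrahedra of $P_e$ at nodes on $\partial P_e$. The overlap constant also needs care, though it follows from the uniform span bound \eqref{eq:patch-span}.
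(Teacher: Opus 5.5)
Your proposal is correct and follows essentially the same route as the paper. Both show $d_e$ is admissible by restricting $w_h$ to $\omega_e$, take $u_e=G_{e,k}d_e$ extended by zero, combine nodal norm equivalence with \eqref{eq:edge-star-scale} for the square sum, and use unisolvence of the $k$ edge nodes for the assembled trace. The differences are cosmetic. The paper notes that each entry of $d_e$ occurs exactly once in the global nodal sum, so no factor six is needed; your count should also carry a harmless factor $k-2$ for the nodes per edge. The paper also bounds the overlap explicitly by $6+4\times 6=30$ via face neighbors rather than through the span bound.
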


\begin{proof}
We initiate the construction by choosing a global field \(w_h\in V_{h,k}\) defined with \(\diver w_h=r\). Mathematically restricting \(w_h\) to the inner star \(\omega_e\) clearly demonstrates that its localized target vector \(d_e\) naturally belongs to the source trace space \(\mathcal D_{e,k}\), precisely because all broken vertex values of \(r\) vanish by hypothesis. We subsequently set \(u_e=G_{e,k}d_e\) and extend it by zero outside the local patch. The trace conditions formulated in \eqref{eq:off-target} and \eqref{eq:on-target} logically follow directly from the surjectivity established in \eqref{eq:edge-star-surj}. Every structural pressure edge trace inherently possesses a polynomial degree of at most \(k-1\). Since its two terminating endpoint values and its \(k-2\) interior Lagrange values uniquely determine this one-dimensional polynomial, the assembled global divergence perfectly agrees with \(r\) on the entire complete edge geometry.

To secure the global bound, we apply fixed-degree norm equivalence independently on each spatial tetrahedron to yield
\begin{equation}
 h^3\sum_{T\in\T_h}\sum_{a\in\mathcal S_{k-1}(T)} |r|_T(a)|^2\leq C(k)\norm r^2.
\end{equation}
In this context, \(\mathcal S_{k-1}(T)\) represents the discrete set of degree-\((k-1)\) localized Lagrange nodes positioned on the vertices and edges of \(T\). Each distinct functional component of the localized vectors \(d_e\) occurs exactly once in this global sum. Hence, applying \eqref{eq:edge-star-scale} deterministically implies the bound expressed in \eqref{eq:raw-square-sum}. A standardized one-ring edge patch topologically contains at most eighteen distinct tetrahedra. Moreover, if a specific tetrahedron resides in \(P_e\), then \(e\) mathematically must be an edge either of that exact tetrahedron or of one of its at most four immediate face neighbors. Thus, it can lie in at most \(6+4\times 6=30\) distinct overlapping edge patches. This structural geometry conclusively proves the principle of bounded overlap, firmly holding true even adjacent to the physical global boundary.
\end{proof}

\section{Explicit Quartic and Quintic Mean Transfers}\label{sec:mean-transfer}

Having established the local edge lifts, we next address the residual element means. In this section, we introduce explicit quartic and quintic mean transfer operators designed to route these residuals while preserving previous invariant trace conditions.

\subsection{A Quartic Two-Cube Transfer}

We define the normalized reference domain block as
\begin{equation}
 \widehat P=[0,2]\times[0,1]^2.
\end{equation}
This specific domain encapsulates the twelve tetrahedra obtained strictly from the standard Freudenthal geometric split of its two constituent reference cubes. We establish the constrained polynomial space
\begin{equation}
 W_4(\widehat P)=\{w\in[C^0(\widehat P)]^3: w|_T\in P_4(T)^3,\ w|_{\partial\widehat P}=0, (\diver w)|_T|_e=0\ \forall(T,e)\}.
\end{equation}
For any functional component $w\in W_4(\widehat P)$, we define its structural element mean mapping as
\begin{equation}
 \mu(w)=\left(\int_T\diver w\right)_{T\subset\widehat P}\in\R^{12}.
\end{equation}

\begin{lemma}[Quartic two-cube macro lift]\label{lem:macro}
The operational image of the mean transfer exactly covers the zero-sum subspace, fulfilling
\begin{equation}\label{eq:macro-image}
 \mu(W_4(\widehat P))=\one^\perp =\{m\in\R^{12}:\textstyle\sum_Tm_T=0\}.
\end{equation}
There is a fixed linear right inverse operator $B:\one^\perp\to W_4(\widehat P)$. On any translated, coordinate-permuted, and $h$-scaled geometric copy $P_h$ representing $\widehat P$, it logically induces a corresponding right inverse satisfying the localized scale bound
\begin{equation}\label{eq:macro-scale}
 |B_hm|_{H^1(P_h)}\leq C_Mh^{-3/2}|m|_{\ell^2}.
\end{equation}
\end{lemma}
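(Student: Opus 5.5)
The plan is to prove the inclusion $\mu(W_4(\widehat P))\subset\one^\perp$ first, then the reverse inclusion by exhibiting explicit fields, and finally to obtain the right inverse and the scaling bound by a fixed finite-dimensional argument.

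The inclusion $\mu(W_4(\widehat P))\subset\one^\perp$ is immediate from the divergence theorem. Every $w\in W_4(\widehat P)$ vanishes on $\partial\widehat P$, so $\sum_T\int_T\diver w=\int_{\partial\widehat P}w\cdot n=0$.

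For the reverse inclusion I would reuse the face-transfer idea of \eqref{eq:vertex-face-transfer}, upgraded so that the divergence trace vanishes on every edge. For each interior face $F=[x_0,x_1,x_2]$ of $\widehat P$ (the tetrahedral dual graph of the twelve elements is connected by the same chain-adjacency argument as in Lemma \ref{lem:vertex-local}), I look for a quartic field supported on the two tetrahedra sharing $F$. The candidate is $\beta_F\,\ell\, n_F$, or more generally $\beta_F p$ with $\beta_F=\lambda_{x_0}\lambda_{x_1}\lambda_{x_2}$ and $p$ a vector of affine functions. By the last clause of Lemma \ref{lem:skeleton-bubble}, $\diver(\beta_F p)$ restricted to an edge of $F$, say $[x_0,x_1]$, equals $\lambda_{x_0}\lambda_{x_1}\,p\cdot\nabla\lambda_{x_2}$. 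This is not automatically zero, so the cubic face bubble alone fails: it violates the constraint $(\diver w)|_T|_e=0$. I would therefore use a genuinely quartic bubble, either $\beta_F\lambda_{x_i}$-type terms combined linearly, or better $\beta_F\cdot\beta$-weighted combinations whose edge traces cancel. The choice is $z$ vectors annihilating $\nabla\lambda_{x_j}$ on both sides, in the spirit of \eqref{eq:spill-annihilation}. Concretely I would take fields $\lambda_{x_0}\lambda_{x_1}\lambda_{x_2}\lambda_{x_i}z$ and impose, edge by edge, that $z\cdot\nabla\lambda_{T^\pm,x_j}=0$ for the opposite index, using the chain gradients \eqref{eq:chain-gradients}. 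If a single face does not admit such a $z$ with nonzero flux, I would combine bubbles on two or three faces sharing an edge so that the spill traces cancel, while the net integrated normal flux through $F$ stays nonzero. Alternatively I would add interior element bubbles $\lambda_0\lambda_1\lambda_2\lambda_3 z$, whose divergence vanishes on all faces, to absorb nothing but keep freedom. The means of the resulting field are $\pm c$ on the two tetrahedra, by the divergence theorem.

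Routing along a spanning tree of the 12-node dual graph, exactly as in Lemma \ref{lem:vertex-local}, then produces all of $\one^\perp$. I expect the existence of these edge-trace-free quartic face transfers to be the main obstacle. If the analytic choice of $z$ gets stuck for some face type, I would fall back on verifying by exact rational linear algebra that the $12\times\dim W_4(\widehat P)$ matrix of $\mu$ has rank $11$. Since $\widehat P$ is one fixed reference configuration, this is a single finite computation.

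Once surjectivity holds, I fix $B$ as the Moore--Penrose right inverse, or as the explicit tree routing; either way it is a fixed linear map on a fixed finite-dimensional space. For a copy $x=x_P+hQ\widehat x$ I set $(B_hm)(x)=h^{-2}Q\,(Bm)(\widehat x)$. Divergence picks up $h^{-3}$, which converts the reference mean $\int_{\widehat T}$ into the physical mean $\int_T=h^3\int_{\widehat T}$. Hence $\mu(B_hm)=m$, and the vanishing of edge traces and of the boundary trace is preserved, because $Q$ is a signed permutation preserving the Freudenthal subdivision. Finally, $|B_hm|_{H^1(P_h)}^2=h^{-4}\cdot h^{-2}\cdot h^3|Bm|^2_{H^1(\widehat P)}\le C h^{-3}|m|^2$, which gives \eqref{eq:macro-scale}.
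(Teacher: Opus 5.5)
There is a genuine gap in the reverse inclusion $\one^\perp\subset\mu(W_4(\widehat P))$, which is the whole content of the lemma. Your primary route cannot succeed: an edge-trace-free quartic transfer supported on the two tetrahedra $T^\pm$ sharing one interior face $F=[x_0,x_1,x_2]$, followed by spanning-tree routing, fails because every such field has zero means. Indeed, a continuous piecewise quartic vanishing on the six exterior faces of $T^+\cup T^-$ equals $\lambda_{x_0}\lambda_{x_1}\lambda_{x_2}\,p^\pm$ with affine $p^\pm$ agreeing on $F$; put $c_i=p^\pm(x_i)$. On the edge $[x_i,x_j]$ of $F$ the divergence trace is $\lambda_{x_i}\lambda_{x_j}\,p^\pm\cdot\nabla\lambda^\pm_{x_k}$, so $c_i,c_j\perp\nabla\lambda^+_{x_k},\nabla\lambda^-_{x_k}$. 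Hence $c_i$ is orthogonal to $\nabla\lambda^+_{x_j}$ and $\nabla\lambda^+_{x_k}$. It is also orthogonal to the two jumps $\nabla\lambda^+_{x_l}-\nabla\lambda^-_{x_l}$ ($l=j,k$), which are multiples of $n_F$. A jump vanishes only when the two faces opposite each other across the corresponding edge of $F$ are coplanar. By the determinant computation in the proof of \cref{lem:quintic}, this happens for exactly one edge of each interior face. Since $[x_i,x_j]$ and $[x_i,x_k]$ cannot both be that edge, $c_i\perp\nabla\lambda^+_{x_j},\nabla\lambda^+_{x_k},n_F$, so $c_i=0$. Thus $w|_F=0$ and $\int_{T^\pm}\diver w=\pm\int_F w\cdot n_F=0$; only element bubbles survive. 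This obstruction is why the paper needs degree five for the two-tetrahedron transfer and a twelve-element macro in degree four.

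Your remedies do not close the gap as written. Combining bubbles on several faces around an edge is the right kind of object. For example, the paper's $w_2=B_{221}e_y-B_{211}e_x+B_{212}e_z$ combines the face Bernstein functions on the three interior faces of one cube surrounding its body diagonal, and $w_1$ uses Bernstein functions on the common face $\{x=1\}$. However, such fields no longer have $\pm$ mean patterns on a face pair, so tree routing is unavailable and you must prove directly that their mean vectors span $\one^\perp$. The paper does exactly this: it exhibits eleven explicit fields, verifies their edge traces with \eqref{eq:bernstein-divergence}, computes their means via \eqref{eq:bernstein-mean} as $A/30$, and uses $\det A=-6$. Your fallback rank computation on $W_4(\widehat P)$ would be a legitimate computer-assisted proof if carried out, and it is consistent with the ranks in \eqref{eq:macro-ranks}. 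In the proposal, though, it is only announced, so surjectivity remains unproved. Your inclusion $\mu(W_4(\widehat P))\subset\one^\perp$ and your scaling argument for \eqref{eq:macro-scale} are correct and match the paper. One correction there: only coordinate permutations, possibly composed with central inversion, preserve the Freudenthal split; general signed permutations do not.
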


\begin{proof}
We systematically order the constituent tetrahedra first by their host cube, and within each individual cube structurally by the fixed Freudenthal mapping sequence
\begin{equation}
 (xyz,xzy,yxz,yzx,zxy,zyx).
\end{equation}
For an interior quartic Bernstein geometric control point denoted by $p_{ijk}=(i/4,j/4,k/4)$, we let $B_{ijk}$ define the global conforming Bernstein basis function whose Bernstein coefficient associated with $p_{ijk}$ is one and whose remaining structural coefficients are strictly zero. Each function $B_{ijk}$ deployed below possesses a guaranteed zero trace evaluated on the exterior domain boundary $\partial\widehat P$. With $e_x,e_y,e_z$ denoting the standard directional coordinate vectors, we explicitly define the distinct basis fields:
\begin{equation}\label{eq:bernstein-basis-w}
\begin{aligned}
w_1&=-B_{422}e_z+(B_{421}+B_{423})e_y,\\
w_2&= B_{221}e_y-B_{211}e_x+B_{212}e_z,\\
w_3&=(B_{432}+B_{332}+B_{232})e_x,\\
w_4&= B_{221}e_x-B_{121}e_y+B_{122}e_z,\\
w_5&=(B_{423}+B_{323}+B_{223})e_x,\\
w_6&= B_{621}e_y-B_{611}e_x+B_{612}e_z,\\
w_7&=-B_{732}e_z+B_{722}e_y+B_{632}e_x,\\
w_8&=(B_{431}+B_{331}+B_{531})e_x,\\
w_9&=(B_{421}+B_{621}+B_{521})e_x,\\
w_{10}&=B_{722}e_z-B_{723}e_y+B_{623}e_x,\\
w_{11}&=(B_{413}+B_{313}+B_{513})e_x.
\end{aligned}
\end{equation}

We mathematically verify the two required trace properties directly from standard Bernstein calculus operations. Evaluated on a unit Freudenthal reference tetrahedron parameterized as $T=[y_0,y_1,y_2,y_3]$, we can write the local function as $w=\sum_{|\alpha|=4}c_\alpha B_\alpha^4$. If $\varepsilon_i$ serves as the $i$-th directional unit multi-index, then the local divergence is computed as
\begin{equation}\label{eq:bernstein-divergence}
 \diver w =4\sum_{|\beta|=3} \left(\sum_{i=0}^3c_{\beta+\varepsilon_i} \cdot\nabla\lambda_i\right)B_\beta^3.
\end{equation}
The specific Bernstein basis functions whose structural multi-indices contain at most two nonzero numerical entries are precisely those functionally carried by the geometrical tetrahedron edges. The direct substitution of the three distinct nonzero localized coefficients characterizing each generated $w_j$ into \eqref{eq:bernstein-divergence} seamlessly results in
\begin{equation}
 (\diver w_j)|_T|_e=0 \qquad(T\subset\widehat P,\ e\subset T),
\end{equation}
thus confirming that $w_j\in W_4(\widehat P)$.

Since every degree-three Bernstein basis function $B_\beta^3$ on a unit-determinant tetrahedron has integral $1/120$, the differentiation formula gives
\begin{equation}\label{eq:bernstein-mean}
 \int_T\diver w =\frac1{30}\sum_{|\alpha|=4} c_\alpha\cdot \sum_{\{i:\alpha_i>0\}}\nabla\lambda_i.
\end{equation}
Applying the relation in \eqref{eq:bernstein-mean} strictly to the previously displayed fields, we verify that the first eleven components of their resultant mean output vectors correctly form the scaled matrix $A/30$, where
\begin{equation}\label{eq:quartic-mean-matrix}
A=\begin{bmatrix}
-1& 1& 0&-1& 0& 0& 0& 0& 1& 0& 0\\
 1& 1& 0& 0& 0& 0& 0& 0& 0& 0& 0\\
 0&-1& 0& 1& 0& 0& 0& 1& 0& 0& 0\\
 0& 0& 1& 1& 0& 0& 0& 0& 0& 0& 0\\
 0&-1& 0& 0& 0& 0& 0& 0& 0& 0& 1\\
 0& 0& 0&-1& 1& 0& 0& 0& 0& 0& 0\\
 0& 0& 0& 0& 0& 1&-1& 0&-1& 1& 0\\
 0& 0& 0& 0& 0& 1& 1& 0& 0&-1& 0\\
 0& 0& 0& 0& 0&-1&-1&-1& 0& 0& 0\\
-1& 0&-1& 0& 0& 0& 1& 0& 0& 0& 0\\
 0& 0& 0& 0& 0&-1& 0& 0& 0&-1&-1
\end{bmatrix},
\end{equation}
and we can verify that $\det A=-6$. Thus the generated eleven mean operational vectors are definitively linearly independent. Their vector components naturally sum to zero solely because every defined $w_j$ explicitly maintains a continuous zero trace evaluated on $\partial\widehat P$. It immediately follows that they successfully form a complete structural basis of $\one^\perp$, which securely proves the image mapping property \eqref{eq:macro-image}. If $m'$ symbolically denotes the first eleven localized components of any generic vector $m\in\one^\perp$, an explicit operational right inverse is efficiently evaluated as
\begin{equation}
 Bm=30\sum_{j=1}^{11}(A^{-1}m')_j w_j.
\end{equation}

If $Q$ represents a standardized coordinate permutation, $x_0$ denotes the lower coordinate corner of the targeted physical two-cube macro patch, and the generated mean entries are structurally permuted directly with the elements, we define the rescaled mapped field as
\begin{equation}
 B_hm(x)=h^{-2}Q(Bm)\left(Q^T\frac{x-x_0}{h}\right).
\end{equation}
Its resulting operational tetrahedron divergence integrals evaluate perfectly to equal $m$, and its squared $H^1$ analytic seminorm equals exactly $h^{-3}|Bm|_{H^1(\widehat P)}^2$. Continuous finite-dimensional mathematical norm equivalence definitively proves \eqref{eq:macro-scale}. After standardized translation down to the origin corner, any arbitrary two face-neighboring physical cubes are efficiently mapped back to the reference $[0,2]\times[0,1]^2$ strictly by employing a proper sequence of coordinate permutations.
\end{proof}

\subsection{An Explicit Quintic Face Transfer}

\begin{lemma}[Quintic two-tetrahedron transfer]\label{lem:quintic}
If two adjacent Freudenthal tetrahedra $T^+$ and $T^-$ structurally share an interior face, there exists a conforming continuous piecewise-$P_5$ vector field $\psi_F$, completely supported on their structural union, such that
\begin{equation}\label{eq:quintic-properties}
 \int_{T^+}\diver\psi_F=1,\qquad \int_{T^-}\diver\psi_F=-1,\qquad (\diver\psi_F)|_T|_e=0\quad\forall(T,e).
\end{equation}
Its continuous exterior boundary trace is strictly zero, and evaluated on an $h$-scaled element pair it maintains the bound
\begin{equation}\label{eq:quintic-scale}
 |\psi_{F,h}|_1\leq C_5h^{-3/2}
\end{equation}
when mathematically normalized as designated in \eqref{eq:quintic-properties}.
\end{lemma}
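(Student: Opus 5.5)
The plan is to build $\psi_F$ from the skeleton bubbles of Lemma~\ref{lem:skeleton-bubble} attached to the face $F=[x_0,x_1,x_2]$. Any field $b_{F,\alpha}z$ with all $\alpha_i\ge1$ and $|\alpha|=5$ is a continuous piecewise $P_5$ vector field supported on $T^+\cup T^-$. It vanishes on the exterior boundary of this pair, since every exterior face omits some $x_i$.

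By the rule \eqref{eq:skeleton-bubble-rule} and its final assertion, the divergence trace of such a field is automatically zero on every edge $[x_i,y^\pm]$ joining a face vertex to an apex. On such an edge, every summand keeps a factor $\lambda_{x_j}$ with $x_j$ not in the edge. So only the three face edges $[x_i,x_j]$ matter. There the only surviving summand comes from differentiating $\lambda_{x_k}$, where $k$ is the third index, and only when $\alpha_k=1$. That summand is $\lambda_{x_i}^{\alpha_i}\lambda_{x_j}^{\alpha_j}\,z\cdot\nabla\lambda_{T,x_k}$ on each of $T^\pm$.

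I would therefore take the ansatz
\[
\psi=\sum_{\alpha}b_{F,\alpha}z_\alpha,\qquad \alpha\in\{\text{permutations of }(3,1,1)\text{ and }(2,2,1)\},
\]
with unknown vectors $z_\alpha\in\R^3$, which gives $6\cdot3=18$ unknowns. The edge conditions are linear in the $z_\alpha$. On the edge $[x_i,x_j]$, collect the monomials $\lambda_{x_i}^{p}\lambda_{x_j}^{q}$ with $p+q=4$. Each must have zero coefficient on both $T^+$ and $T^-$. The constraint reads $z_\alpha\cdot\nabla\lambda_{T^\pm,x_k}=0$ for every $\alpha$ with $\alpha_k=1$, possibly coupled across several $\alpha$ when they contribute to the same monomial.

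The mean condition comes from the divergence theorem. Because $\psi$ vanishes on the rest of $\partial T^+$, we have $\int_{T^+}\diver\psi=\int_F\psi\cdot n_F=\sum_\alpha c_\alpha\,z_\alpha\cdot n_F$, with explicit positive Dirichlet-type constants $c_\alpha=\int_F b_{F,\alpha}$. The value on $T^-$ is the negative of this. So it suffices to exhibit one admissible choice of the $z_\alpha$ with $\sum_\alpha c_\alpha z_\alpha\cdot n_F\ne0$, and then normalize.

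The main obstacle is that the naive choices fail, and I expect this step to need a genuine search. For $(2,2,1)$ bubbles the admissible $z$ must annihilate both $\nabla\lambda_{T^+,x_k}$ and $\nabla\lambda_{T^-,x_k}$. These two gradients share their tangential part on $F$, since continuity forces it, so their cross product is tangent to $F$ and such bubbles carry zero flux. The flux must therefore come from $(3,1,1)$-type terms. For these, the spill onto the two face edges through the cubed vertex must be cancelled by combinations, in the spirit of the borrowed fields \eqref{eq:two-tet-borrow-zero}--\eqref{eq:two-tet-borrow-one}.

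I would first try $\alpha=(3,1,1)$ with $x_0$ cubed and $z=n_F$. Its spills on $[x_0,x_1]$ and $[x_0,x_2]$ are $\lambda_{x_0}^3\lambda_{x_j}\,n_F\cdot\nabla\lambda_{T^\pm,x_k}$. I would try to cancel them with $(2,2,1)$ and $(3,1,1)$ terms whose vectors are chosen as in \eqref{eq:spill-annihilation}. If no hand choice works, I would fall back on solving the $18$-unknown linear system exactly.

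Only finitely many interior face types occur. By the chain description \eqref{eq:chain-barycentric}--\eqref{eq:chain-gradients}, every interior face corresponds to one of the four vertex deletions of one of the six chains. The face types reduce further under translation, coordinate permutation and central inversion. The verification is thus a finite exact rational computation on these reference pairs, and for each type one records an explicit solution with nonzero flux.

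Finally, the scaling bound \eqref{eq:quintic-scale} follows as in Lemma~\ref{lem:macro}. The reference field $\widehat\psi$ is transplanted by $\psi_{F,h}(x)=h^{-2}Q\widehat\psi(Q^T(x-x_0)/h)$, which keeps the means equal to $\pm1$ and the edge traces zero. This gives $|\psi_{F,h}|_1^2=h^{-3}|\widehat\psi|_1^2$. Taking the maximum over the finitely many face types yields $C_5$.
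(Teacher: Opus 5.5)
There is a genuine gap. Your setup is sound and matches the paper's framework: face skeleton bubbles $b_{F,\alpha}z$ with $|\alpha|=5$, the flux formula for the means, reduction to finitely many face types, and the $h^{-2}$ transplant. But the one step that proves existence is left as an unperformed search, and the analysis meant to guide it is wrong.

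\textbf{The false claim about $(2,2,1)$ bubbles.} You argue that a $(2,2,1)$ bubble with $\alpha_k=1$ carries zero flux. This tacitly assumes that $\nabla\lambda_{T^+,x_k}$ and $\nabla\lambda_{T^-,x_k}$ are linearly independent. Both have the same nonzero tangential part on $F$. So they are dependent only if they are equal, that is, only if $\lambda_{T^+,x_k}$ and $\lambda_{T^-,x_k}$ are one affine function. That happens exactly when the exterior faces $[x_i,x_j,y^+]$ and $[x_i,x_j,y^-]$ are coplanar. In that case the admissible $z$ form the whole plane $\{z:z\cdot\nabla\lambda_{x_k}=0\}$. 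This plane differs from the tangent plane of $F$, so it contains $z$ with $z\cdot n_F\neq0$.

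\textbf{The missing idea.} This degenerate case occurs on every interior Freudenthal face. Each such face has a distinguished edge $[a_1,a_2]$ whose two exterior faces are coplanar; the paper checks $\det(a_2-a_1,v^+-a_1,v^--a_1)=0$ on the two face types. The paper's field is then the single bubble $c\,t\,\lambda_0(\lambda_1)^2(\lambda_2)^2$, with $t$ in the common plane and not parallel to $[a_1,a_2]$. This gives $t\cdot\nabla\lambda_0^\pm=0$ and $t\cdot n_F\neq0$. On the reference pair $[000,001,011,111]$, $[000,001,101,111]$ the field is $630\,t(1-z)(z-y)^2x^2$, respectively $630\,t(1-z)(z-x)^2y^2$, with $t=(1,-1,0)$. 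Both faces opposite $000$ lie in $\{z=1\}$, so $\nabla\lambda_{000}=-e_z$ on both sides.

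\textbf{Why the $(3,1,1)$ route fails.} The route you propose instead cannot produce flux. On a face edge $[x_i,x_j]$, the surviving monomials $\lambda_{x_i}^3\lambda_{x_j}$, $\lambda_{x_i}\lambda_{x_j}^3$ and $\lambda_{x_i}^2\lambda_{x_j}^2$ each come from exactly one $\alpha$. So your $18$-unknown system decouples, and no spill of a $(3,1,1)$ term can be cancelled by another term. Each $(3,1,1)$ vector must itself be orthogonal to the four gradients $\nabla\lambda_{T^\pm,x_j}$ and $\nabla\lambda_{T^\pm,x_k}$ of the two non-cubed vertices.

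On the reference face $[000,001,111]$, these four gradients span $\R^3$ for each choice of cubed vertex, so every $(3,1,1)$ coefficient vanishes. The $(2,2,1)$ bubbles with $\alpha_k=1$ at $001$ or $111$ are forced to have $z$ parallel to $[000,111]$ or $[000,001]$ respectively, hence zero flux. The only flux-carrying solution is the $(2,2,1)$ bubble with the single power at $000$, which is exactly the type you dismissed.

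An exact solve of your system would have found it. As written, however, the proposal asserts the outcome of that search (``for each type one records an explicit solution with nonzero flux'') without establishing it, and that existence is the entire content of the lemma.
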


\begin{proof}
On the reference canonical element pair defined spatially by
\begin{equation}
 K_H=\{0\leq x\leq y\leq z\leq1\},\qquad K_F=\{0\leq y\leq x\leq z\leq1\},
\end{equation}
we formulate the fixed directional vector $t=(1,-1,0)$ and explicitly construct
\begin{equation}\label{eq:quintic-explicit}
 \psi_H=630t(1-z)(z-y)^2x^2,\qquad \psi_F=630t(1-z)(z-x)^2y^2.
\end{equation}
The continuous functional traces flawlessly agree along the internal boundary $x=y$, and the derived vector field reliably vanishes on all six exterior surrounding faces. Direct spatial differentiation robustly calculates
\begin{equation}
\begin{aligned}
 \diver\psi_H&=1260(1-z)x(z-y)(x+z-y),\\
 \diver\psi_F&=-1260(1-z)y(z-x)(y+z-x).
\end{aligned}
\end{equation}
Each analytical expression inherently vanishes on every individual geometric edge belonging to its host tetrahedron. Exact structural integration computed strictly over the continuous domains $0\leq x\leq y\leq z\leq1$ and $0\leq y\leq x\leq z\leq1$ successfully yields
\begin{equation}\label{eq:quintic-integrals}
 \int_{K_H}\diver\psi_H=1,\qquad \int_{K_F}\diver\psi_F=-1,\qquad |\psi|_1^2=\frac{504}{11}.
\end{equation}

The extended geometric construction operating on an arbitrary  interior element face proceeds as follows. For a specifically selected face edge $E=[a_i,a_j]$ and the two distinct geometric vertices $v^+,v^-$ located exactly opposite the shared common face, we evaluate the determinant value
\begin{equation}
 \Delta(E)=\det(a_j-a_i,\,v^+-a_i,\,v^--a_i).
\end{equation}
The resultant exterior spatial faces $[a_i,a_j,v^+]$ and $[a_i,a_j,v^-]$ mathematically sit exactly coplanar if and only if $\Delta(E)=0$. Up to uniform geometric translation, signed coordinate permutation, and structural central inversion, the shared common face and its two opposite vertices are uniquely classified. Specifically, for the interior common face $F=(000,001,111)$, the opposite vertices are deterministically located at $(011,101)$. For the alternative interior face $F=(001,011,111)$, the geometric opposite vertices are similarly identified at $(000,112)$. For the corresponding three face edges processed securely in lexicographic order, the corresponding computed values of the determinant parameter $\Delta$ evaluate strictly to $(-1,1,0)$ and $(-1,0,-1)$, respectively. Hence, every arbitrary interior face undeniably maintains a uniquely distinguished functional edge, and it evaluates specifically as the structural interval $[001,111]$ in both standard canonical geometric configurations.

We logically write the target common face as $F=[a_0,a_1,a_2]$, designating the unique distinguished edge as $[a_1,a_2]$, and we let $\lambda_i^\pm$ explicitly denote the defined barycentric coordinate of vertex $a_i$ operating in the domain element $T^\pm$. We subsequently choose a constant mathematical vector $t$ geometrically tangent to the derived common plane governing the two opposite faces, but structurally positioned not parallel to the distinguished segment $[a_1,a_2]$. The established common plane and the local plane containing $F$ remain geometrically distinct and strictly intersect precisely traversing the directional line through $[a_1,a_2]$. Consequently $t\cdot n_F\ne0$, and the localized function
\begin{equation}\label{eq:quintic-bary}
 \psi^\pm=c\,t\,\lambda_0^\pm(\lambda_1^\pm)^2(\lambda_2^\pm)^2
\end{equation}
exhibits mathematically matching traces seamlessly across $F$ and maintains a strictly zero exterior domain trace. On the unique distinguished geometric edge, the only term potentially surviving localized differentiation inherently contains the structural factor $t\cdot\nabla\lambda_0^\pm=0$. On the other two accompanying face edges, a corresponding squared spatial factor intrinsically remains unharmed, and on the three connected edges meeting the opposite vertex, at least one of two defined vanishing factors persistently remains intact following localized differentiation. Thus all defined broken edge mathematical traces of the resulting divergence evaluate structurally to zero. Finally, the localized volume integral yields
\begin{equation}
 \int_{T^\pm}\diver\psi^\pm =\pm c(t\cdot n_F)\int_F\mu_0\mu_1^2\mu_2^2,
\end{equation}
where $\mu_i$ designate the specified face barycentric coordinates. Because the resulting final spatial integral evaluates uniformly positive, the constant $c$ effectively normalizes the operational functional means exactly to $(1,-1)$. Linearly scaling a unit-integral local mathematical field robustly by the factor $h^{-2}$ completely satisfies and provides the stability bound in \eqref{eq:quintic-scale}.
\end{proof}

\section{Local Mean Routing and the Mean-Preserving Edge Lift}\label{sec:routing}

The local constructions established previously are combined into a globally stable edge lift by efficiently routing the element means. We let $\mathcal G_N$ be the complete set of the $N^3$ closed grid cubes. Fixing a subcollection $\mathcal C\subset\mathcal G_N$, we define the localized subspace
\begin{equation}\label{eq:local-subspace}
 U_{h,k}(\mathcal C)= \{u\in V_{h,k}:\supp u\subset |\mathcal C|\}, \qquad |\mathcal C|=\bigcup_{C\in\mathcal C}C.
\end{equation}
The corresponding cube intersection graph possesses the vertex set $\mathcal C$, with two distinct cubes considered adjacent when their closed physical cubes share a nonempty intersection. Thus, an edge within this graph may represent a geometric face, an edge, or a vertex contact. For each fixed collection $\mathcal C$, the following lemma constructs a robust linear mean-correction operator operating on $U_{h,k}(\mathcal C)$.

\begin{lemma}[Bounded-cluster mean routing]\label{lem:routing}
For every constant $L\geq1$, there exist numerical bounds $A(L)$ and $C(L)$ satisfying the following property. For every grid parameter $N\geq2$, every polynomial degree $k\in\{4,5\}$, and every fixed collection $\mathcal C\subset\mathcal G_N$ of at most $L$ cubes whose cube intersection graph is connected, two specific structural conditions are met. First, there exists an enlargement $\mathcal C^+\supset\mathcal C$ containing at most $A(L)$ cubes. Second, there is a linear operator $\mathcal R_{\mathcal C,k}:U_{h,k}(\mathcal C)\to V_{h,4}$ that depends strictly on $N$, $k$, and $\mathcal C$. For any vector field $u\in U_{h,k}(\mathcal C)$, we set $c=\mathcal R_{\mathcal C,k}u$ and $m_T=\int_T\diver u$. The resulting correction explicitly satisfies
\begin{equation}\label{eq:routing-properties}
 \supp c\subset|\mathcal C^+|,\qquad (\diver c)|_T|_e=0,\qquad \int_T\diver(u+c)=0\quad(T\in\T_h),\qquad |c|_1\leq C(L)|u|_1.
\end{equation}
Furthermore, both stability constants $A(L)$ and $C(L)$ are strictly independent of $h$, $N$, and the chosen cluster $\mathcal C$.
\end{lemma}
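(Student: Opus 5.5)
The plan is to route the element means of $u$ through a tree of face-adjacent tetrahedra, using the transfer fields of \cref{lem:macro} and \cref{lem:quintic}, and to keep the whole construction inside a bounded enlargement so that the constants depend only on $L$.

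\textbf{Step 1: the total mean.} Since $u$ vanishes on $\partial|\mathcal C|$, or at least on $\partial\Omega\cup\partial|\mathcal C|$ because $\supp u\subset|\mathcal C|$, the divergence theorem gives $\sum_{T\subset|\mathcal C|}m_T=0$, and $m_T=0$ for every $T\not\subset|\mathcal C|$.

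\textbf{Step 2: the enlargement and the tree.} Cube contacts through an edge or a vertex are not face contacts, so I cannot transfer directly between such cubes. Instead I take $\mathcal C^+$ to be $\mathcal C$ together with all cubes of $\mathcal G_N$ within $\ell^\infty$ lattice distance $1$ of $\mathcal C$; this gives $|\mathcal C^+|\le 27L=:A(L)$. Any two cubes of $\mathcal C$ that meet can be joined inside $\mathcal C^+$ by a chain of at most three face-adjacent cubes, changing one coordinate at a time. Such a chain stays inside $[0,N]^3$, since both endpoint cubes lie in the grid and the intermediate ones lie in their bounding box. Hence the face-adjacency graph of $\mathcal C^+$ contains a connected subgraph spanning $\mathcal C$, and I fix a spanning tree $\Gamma$ of that subgraph, with at most $3L$ cube edges.

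\textbf{Step 3: the routing.} The mean vector $m\in\R^{\T(\mathcal C^+)}$ has zero sum. I first equalize means inside each cube and across each tree edge.
\begin{itemize}
\item For $k=4$, each tree edge $C\sim C'$ is a two-cube block congruent to $\widehat P$, and \cref{lem:macro} supplies $B_h$ on $\one^\perp$ of its twelve tetrahedra. I process the tree from the leaves to the root. At a leaf cube $C$ with parent $C'$, I apply $B_h$ to the vector that equals $-m_T$ on the six tetrahedra of $C$ and $+\sum_{T\subset C}m_T/6$ on those of $C'$. This vector has zero sum, kills all means in $C$, and pushes the total into $C'$. At the root, the total is zero, and the six means are cleared by one more application of $B_h$ on a root block, which exists whenever $N\ge2$.
\item For $k=5$, I apply the same scheme using face transfers $\psi_F$ along a spanning tree of the dual graph of the tetrahedra in $\mathcal C^+$. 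Each $\psi_F$ is piecewise $P_5$. The statement asks for the target $V_{h,4}$, which I read as $V_{h,k}$; otherwise I would use the quartic macro throughout, which works for $k=5$ as well, since $V_{h,4}\subset V_{h,5}$. The quartic macro alone therefore suffices, and I intend to use it for both degrees.
\end{itemize}

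\textbf{Step 4: the properties of $c$.} Every transfer field vanishes on the boundary of its block and has vanishing broken divergence traces on all edges, so $c$ inherits $(\diver c)|_T|_e=0$ and $\supp c\subset|\mathcal C^+|$. The operator is a composition of fixed linear maps, hence linear, and depends only on $N$, $k$ and $\mathcal C$.

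\textbf{Step 5: the bound.} Each routed quantity is a partial sum of at most $6\cdot 27L$ entries of $m$, and there are at most $O(L)$ transfers, each bounded by $C_Mh^{-3/2}$ times its data. Hence $|c|_1\le C(L)h^{-3/2}|m|_{\ell^2}$. On the other hand,
\[
|m_T|\le |T|^{1/2}\norm{\diver u}_{L^2(T)}\le Ch^{3/2}|u|_{H^1(T)},
\]
so $|m|_{\ell^2}\le Ch^{3/2}|u|_1$ by \eqref{eq:div-bound}. The two powers of $h$ cancel, giving $|c|_1\le C(L)|u|_1$.

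\textbf{Main obstacle.} I expect the main difficulty to be the geometric step near $\partial\Omega$. I have to make sure that the bridging face chains for edge and vertex contacts stay inside the grid, and that every tree edge is a genuine two-cube block on which the zero boundary trace of \cref{lem:macro} is admissible. Both blocks lie in $\overline\Omega$, and a zero trace on the block boundary is compatible with the global boundary condition, so I expect this to go through. The remaining point is to make the uniformity in $N$ explicit: the constants depend only on the tree size and on $C_M$.
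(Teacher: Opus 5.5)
Your proposal is correct and follows essentially the paper's argument: you bridge intersecting cubes by face paths of at most three steps, route the means from the leaves to the root along a tree of face-adjacent cubes using the quartic two-cube right inverse of \cref{lem:macro} for both degrees, and cancel the $h^{-3/2}$ of \eqref{eq:macro-scale} against $|m_T|\le Ch^{3/2}|u|_{H^1(T)}$. The differences are cosmetic: you take the full $\ell^\infty$-neighbourhood as $\mathcal C^+$ (at most $27L$ cubes), clear each cube while pushing its total equally onto the parent's six tetrahedra, and close with one extra root block. The paper instead uses anchor tetrahedra with companion cubes (at most $6L$ cubes) and routes subtree sums from anchor to anchor.
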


\begin{proof}
All algorithmic choices formulated below depend exclusively on $N$ and $\mathcal C$. We sequentially order the grid cubes lexicographically by their integer lower corners. We then subdivide every edge of a lexicographic spanning tree of the cube intersection graph via the unique path that alters differing coordinates strictly in the sequence $x, y, z$. Because intersecting cubes possess lower corners differing by at most one unit in each coordinate direction, each resulting sequential path requires at most three face steps. All intermediate lower corners remain coordinatewise bounded between the two endpoints and therefore safely belong to the grid. Consequently, at most two intermediate cubes are added for each of the at most $L-1$ tree edges. Their total union $\mathcal H(\mathcal C)$ is therefore a face-connected geometric enlargement comprising at most $3L$ cubes. Within every individual cube, we choose the first of its six coordinate-order tetrahedra to act as the local anchor. We subsequently choose the lexicographically first available face neighbor situated inside $\Omega$ to serve as its companion, a choice securely guaranteed to exist for $N\geq2$. Finally, we select the lexicographic breadth-first tree spanning $\mathcal H(\mathcal C)$. For each oriented adjacent cube pair, we fix an analytical right inverse drawn from \cref{lem:macro}. Together with the preceding geometric choices, these operators depend intrinsically only on $N$ and $\mathcal C$. We designate $\mathcal C^+$ as the comprehensive union of all cubes utilized in this construction. Because at most one companion is supplemented for each cube of $\mathcal H(\mathcal C)$, we guarantee $|\mathcal C^+|\leq6L$, naturally permitting the choice $A(L)=6L$.

For a given field $u\in U_{h,k}(\mathcal C)$, we extend the functional $m_T$ by zero over the tetrahedra contained in $|\mathcal H(\mathcal C)|\setminus|\mathcal C|$. Since $u\in V_{h,k}$ and perfectly vanishes outside $|\mathcal C|$, the divergence theorem analytically dictates $\sum_Tm_T=0$. For every non-anchor tetrahedron belonging to a cube $C$, we prescribe the exact correction mean $\rho_T=-m_T$, while prescribing on its specific anchor the aggregated value
\begin{equation}\label{eq:anchor-prescribe}
 \rho_{T_C}=\sum_{T\subset C,\ T\ne T_C}m_T.
\end{equation}
The six corresponding entries of $\rho$ within $C$ structurally sum to zero. We apply the corresponding macro right inverse operating on $C$ and its companion to the mapped twelve-vector evaluating to $\rho$ on $C$ and evaluating to zero on the companion. After comprehensively executing this for every cube, all non-anchor means of $u$ combined with the correction completely vanish, and the residual accumulated mean at $T_C$ precisely equals $g_C=\sum_{T\subset C}m_T$.

We logically root the fixed continuous face tree. For each individual cube $C$, we define the subtree sum
\begin{equation}\label{eq:subtree-sum}
 G_C=\sum_{D\text{ in the subtree rooted at }C}g_D.
\end{equation}
These generated numbers are fixed linear functions evaluated from the original element means. Commencing systematically at the tree leaves, we apply the macro procedure on each child and parent pair directly to the twelve-vector holding $-G_C$ at the child anchor, holding $+G_C$ at the parent anchor, and maintaining zero elsewhere. After all descendants of $C$ have been appropriately processed, the numerical value resting at its anchor is exactly $G_C$. This targeted operation completely eliminates it and correctly passes the integrated subtree total upwards to the parent. At the terminal root, the remaining unallocated value evaluates to $G_{C_{\rm root}}=\sum_Cg_C=0$. We define the total operator $\mathcal R_{\mathcal C,k}u$ to be the analytical sum of all these localized macro fields.

This procedure requires at most $C_0(L)$ distinct algorithmic operations. Every routed vector behaves as a fixed linear combination of the foundational original means, ensuring that
\begin{equation}\label{eq:routing-coeff}
 \sum_j|\rho^{(j)}|_{\ell^2}^2\leq C(L)\sum_T|m_T|^2.
\end{equation}
The deployed macro supports exhibit a spatial overlap bounded strictly by $C(L)$. Moreover, the individual element variations are bounded by
\begin{equation}\label{eq:mean-variation}
 |m_T|\leq |T|^{1/2}\norm{\diver u}_{L^2(T)} \leq C h^{3/2}|u|_{H^1(T)}.
\end{equation}
Combining this local estimate with \eqref{eq:macro-scale} and \eqref{eq:routing-coeff} conclusively gives $|c|_1\leq C(L)|u|_1$. Every constructed macro possesses a zero patch trace alongside an edge-zero divergence, maintaining this behavior even on physical boundary patches. Therefore, extending the field by zero remains globally conforming and flawlessly preserves any homogeneous boundary data.

The designated functionals $u\mapsto m_T$, the finite routing maps, and their aggregate sum are fundamentally linear in nature. Thus, for each geometrically fixed cluster $\mathcal C$, the specified construction accurately defines a continuous linear operator $\mathcal R_{\mathcal C,k}$ mapping on $U_{h,k}(\mathcal C)$. The resulting operational family $\{\mathcal R_{\mathcal C,k}\}_{\mathcal C}$ is uniformly bounded across all structural clusters of the prescribed size.
\end{proof}

For the specific degree $k=5$, \cref{lem:quintic} effectively furnishes routing directed along a tetrahedron and face spanning tree. The subsequent argument utilizes the quartic macro uniformly for both degrees, exploiting the natural containment $V_{h,4}\subset V_{h,5}$.

\begin{proposition}[Uniform edge lift in degrees four and five]\label{prop:edge}
Let $k\in\{4,5\}$ and assume a grid size $N\geq3$. If an arbitrary pressure field $r\in Q_{h,k}$ has zero tetrahedron means and analytically vanishes on every tetrahedron and vertex incidence, there exists a target field $L_h^2r\in V_{h,k}$, depending strictly linearly on $r$, satisfying the combined conditions
\begin{equation}\label{eq:edge-match}
 (\diver L_h^2r)|_T|_e=r|_T|_e \quad\forall(T,e),
\end{equation}
\begin{equation}\label{eq:edge-mean}
 \int_T\diver L_h^2r=0 \quad\forall T,
\end{equation}
\begin{equation}\label{eq:edge-stable}
 |L_h^2r|_1\leq C_2(k)\norm r.
\end{equation}
\end{proposition}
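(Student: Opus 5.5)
The plan is to combine the raw edge lifts of \cref{lem:raw-edge} with the cluster routing of \cref{lem:routing}, applied edge by edge so that the mean corrections stay local and inherit bounded overlap. Given $r$ as in the statement, \cref{lem:raw-edge} applies directly, since $r$ vanishes at every broken vertex. It produces fields $u_e\in V_{h,k}$, linear in $r$, with the off-target and on-target node conditions \eqref{eq:off-target}--\eqref{eq:on-target} and the square-sum bound \eqref{eq:raw-square-sum}. Each $u_e$ is supported in $P_e$.

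For each geometric edge $e$, let $\mathcal C_e$ be the set of grid cubes meeting $P_e$ in a set of positive volume. Table~\ref{tab:maximal-span} shows that $P_e$ spans at most three units per coordinate, so $|\mathcal C_e|\leq 27=:L$. The cube intersection graph of $\mathcal C_e$ is connected, because $P_e$ is a connected union of tetrahedra glued across faces and each tetrahedron lies in a single cube. Hence $u_e\in U_{h,k}(\mathcal C_e)$. Set $c_e=\mathcal R_{\mathcal C_e,k}u_e\in V_{h,4}\subset V_{h,k}$ and define
\[
L_h^2r=\sum_e(u_e+c_e).
\]
The operator is linear, since every constituent map is linear and the clusters are fixed once $N$ is fixed. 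By \eqref{eq:routing-properties}, each $u_e+c_e$ has zero mean divergence on every tetrahedron, so \eqref{eq:edge-mean} holds.

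For the edge traces, $\diver c_e$ vanishes on every $(T,e')$ by \eqref{eq:routing-properties}. Therefore $(\diver L_h^2r)|_T|_{e'}=\sum_e(\diver u_e)|_T|_{e'}$, and the final statement of \cref{lem:raw-edge} identifies this sum with $r|_T|_{e'}$. More precisely, at vertex nodes every $u_e$ contributes zero, matching $r=0$ there. At the interior nodes of $e'$, only $u_{e'}$ contributes, and it contributes $r$. A degree $\le k-1$ polynomial on an edge is fixed by these $k$ values, which gives \eqref{eq:edge-match}.

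For stability, \eqref{eq:routing-properties} gives $|c_e|_1\leq C(L)|u_e|_1$. The supports of the $u_e+c_e$ lie in $|\mathcal C_e^+|$, which contains at most $A(L)$ cubes within a bounded distance of $e$. A given cube therefore meets only a bounded number $M$ of these supports, uniformly in $N$ and up to the boundary. Then
\[
|L_h^2r|_1^2\leq M\sum_e|u_e+c_e|_1^2\leq 2M(1+C(L)^2)\sum_e|u_e|_1^2\leq C\norm r^2,
\]
which is \eqref{eq:edge-stable}.

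The main obstacle I expect is this overlap count. \cref{lem:routing} only bounds the number of cubes in $\mathcal C^+$, not their location, and its enlargement adds companion cubes and path cubes. One has to check that they stay within a fixed distance of $\mathcal C$. They do: path cubes lie coordinatewise between tree endpoints, and companions are face neighbors. So $\mathcal C^+_e$ lies in a fixed ball around $e$, and counting the edges with $\mathcal C^+_e$ containing a given cube yields $M$ independent of $N$. The hypothesis $N\geq3$ is used only so that the earlier lemmas, including \cref{lem:routing}, apply.
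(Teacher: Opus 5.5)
Your proposal is correct and follows the paper's proof essentially step by step. You use the raw edge lifts $u_e$ from \cref{lem:raw-edge}, the per-edge corrections $c_e=\mathcal R_{\mathcal C_e,k}u_e$ from \cref{lem:routing}, the edge-zero divergence of the corrections for \eqref{eq:edge-match}, and a bounded-overlap count for the enlarged clusters $\mathcal C_e^+$ for \eqref{eq:edge-stable}; the only cosmetic difference is that you take $\mathcal C_e$ to be the host cubes of $P_e$ rather than all cubes meeting $P_e$, which gives the explicit bound $|\mathcal C_e|\le 27$ and a slightly cleaner connectivity argument.
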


\begin{proof}
For each geometric edge $e$, we define the targeted mesh-dependent cube collection as
\begin{equation}\label{eq:cube-collection}
 \mathcal C_e=\{C\in\mathcal G_N:C\cap P_e\ne\varnothing\}.
\end{equation}
The underlying tetrahedra comprising $P_e$ are mutually face-connected. Hence, the host cubes containing them are structurally connected within the broader cube intersection graph, where any additional contiguous cube in $\mathcal C_e$ must meet one of these core cubes precisely at a point located in $P_e$. Thus, the intersection graph defining $\mathcal C_e$ is strictly connected. Its overall cardinality is bounded firmly by one absolute numerical limit $L_E$, determined previously by \eqref{eq:patch-span}. Utilizing \cref{lem:routing}, we fix the corresponding mapping operator $\mathcal R_{\mathcal C_e,k}$ and explicitly denote its algorithmically selected enlargement by $\mathcal C_e^+$.

We let $u_e$ represent the distinct compact per-edge fields constructed in \cref{lem:raw-edge}. It follows that $u_e\in U_{h,k}(\mathcal C_e)$, and we naturally choose $u_e=0$ whenever the corresponding localized edge data vanish. We subsequently define the adjustment term $c_e=\mathcal R_{\mathcal C_e,k}u_e\in V_{h,4}\subset V_{h,k}$ and structurally set the aggregate function
\begin{equation}\label{eq:L2-aggregate}
 L_h^2r=\sum_e(u_e+c_e).
\end{equation}
The applied corrections possess a strict edge-zero divergence, meaning \eqref{eq:off-target} through \eqref{eq:on-target} alongside \cref{lem:raw-edge} successfully secure \eqref{eq:edge-match}. Each individually corrected field maintains a zero mean on every tetrahedron, definitively establishing \eqref{eq:edge-mean}. The defined edge clusters exhibit a globally bounded overlap. Moreover, every discrete cube contained in $\mathcal C_e^+$ lies spatially within a uniformly bounded sequence of cube-face steps emanating from $P_e$. Hence, if a fixed targeted cube belongs to $\mathcal C_e^+$, then $e$ mathematically must be an edge of a tetrahedron located within a fixed-ring geometric neighborhood of that specific cube. General shape regularity combined with the finite local valence characteristic of the Freudenthal mesh securely provides a uniform upper bound on the total number of such candidate edges. Thus, the structurally enlarged clusters also maintain a tightly bounded overlap. Therefore, applying \cref{lem:routing} and \eqref{eq:raw-square-sum} gives
\begin{equation}\label{eq:final-edge-bound}
 |L_h^2r|_1^2 \leq C(k)\sum_e|u_e+c_e|_1^2 \leq C(k)\sum_e|u_e|_1^2 \leq C_2(k)^2\norm r^2.
\end{equation}
The underlying operational map $r\mapsto u_e$ is structurally linear, and the operator $\mathcal R_{\mathcal C_e,k}$ depends solely on the fixed topology of the cluster. Consequently, the unified mapping operator $L_h^2$ remains strictly linear.
\end{proof}

\begin{remark}[Boundary patches and the two smallest grids]\label{rem:boundary}
For any grid size $N\geq2$, even a corner topological cube possesses three face neighbors strictly inside $\Omega$, ensuring that the companion utilized in \cref{lem:routing} always reliably exists. A generated macro field touching $\partial\Omega$ maintains a strict zero trace securely on its complete rectangular exterior boundary and thus mathematically satisfies the homogeneous physical boundary condition. The specific cases $N=1,2$ constitute two isolated, fixed finite-dimensional spatial meshes. For a geometrically fixed $k$, we can simply restrict the mapping $\diver:V_{h,k}\to Q_{h,k}$ precisely to the orthogonal complement of its kernel evaluated in the energy inner product $(\nabla u,\nabla v)_{L^2}$. This restriction forms a strict bijection, and its operational inverse possesses a finite analytical norm mapping from $L^2$ to the seminorm $|\cdot|_1$. Enlarging the universal constant $C_k$ to encompass these two specific discrete values rigorously proves the overarching result for $N=1,2$, meaning the complex local classification is analytically required only for grids where $N\geq3$.
\end{remark}

\section{Low-Degree Element-Bubble Closure}\label{sec:bubble}

To resolve the remaining interior domain values, we introduce the required local element-bubble closures. We let $T$ represent an arbitrary tetrahedron parameterized with barycentric coordinates $\lambda_0,\ldots,\lambda_3$, possessing the standard quartic internal bubble $b_T=\lambda_0\lambda_1\lambda_2\lambda_3$. For the indexing values $s=0,1$, we mathematically define the local operators
\begin{equation}\label{eq:bubble-operator}
 D_s:P_s(T)^3\longrightarrow P_{s+3}(T),\qquad D_sp=\diver(b_Tp),
\end{equation}
and the corresponding constrained polynomial subspace
\begin{equation}\label{eq:bubble-space}
 \mathcal E_s(T)=\{q\in P_{s+3}(T):q|_e=0\text{ for all }e\subset T, \ \int_Tq=0\}.
\end{equation}

\begin{lemma}[Low-degree bubble isomorphism]\label{lem:bubble}
For the parameter values $s=0,1$, the specified mapping $D_s:P_s(T)^3\to\mathcal E_s(T)$ acts as a complete isomorphism. Furthermore, its exact operational inverse obeys the localized stability bound
\begin{equation}\label{eq:bubble-bound}
 |b_TD_s^{-1}q|_{H^1(T)}\leq C\norm q_{L^2(T)},
\end{equation}
with a uniform global constant $C$ operating seamlessly over $\T_h$.
\end{lemma}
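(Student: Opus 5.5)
We plan to prove \cref{lem:bubble} in three steps: show that $D_s$ maps into $\mathcal E_s(T)$, compare dimensions, prove injectivity, and then transfer the bound by scaling.

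\textbf{Range.} Write $b_T=\lambda_0\lambda_1\lambda_2\lambda_3$. For $p\in P_s(T)^3$ we have
\[
D_sp=\nabla b_T\cdot p+b_T\diver p .
\]
On an edge $e\subset T$, two barycentric coordinates vanish. Hence $b_T|_e=0$. Moreover, by the product rule \eqref{eq:skeleton-bubble-rule} of \cref{lem:skeleton-bubble}, every summand of $\nabla b_T$ omits only one factor, so it still carries a vanishing factor on $e$. Therefore $D_sp|_e=0$. Since $b_Tp=0$ on $\partial T$, the divergence theorem gives $\int_TD_sp=0$. Thus $D_s$ maps into $\mathcal E_s(T)$.

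\textbf{Dimensions.} The traces of $P_{s+3}$ on the edge skeleton are fixed by $4$ vertex values and $s+2$ interior values per edge. Their kernel consists of the face bubbles and the element bubbles. For $s=0$ the kernel is spanned by the four cubic face bubbles $\lambda_i\lambda_j\lambda_k$, so $\dim\mathcal E_0=4-1=3=\dim P_0^3$. For $s=1$ the kernel consists of the four face bubbles times $P_1(F)$, which gives $12$ functions, together with $b_T$ itself. This yields $\dim\mathcal E_1=13-1=12=\dim P_1^3$. It therefore suffices to prove that $D_s$ is injective.

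\textbf{Injectivity.} For $s=0$ the argument is direct. If $p\cdot\nabla b_T=0$ with $p\ne0$ constant, then $b_T$ is constant along lines parallel to $p$. Every such line through an interior point reaches $\partial T$, where $b_T=0$, while $b_T>0$ inside; this is a contradiction.

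For $s=1$ we first reduce to a single tetrahedron using the contravariant Piola transform. Let $x=x_0+J\hat x$ and set $v=(\det J)^{-1}J\,\hat v\circ F^{-1}$. This transform satisfies $\diver v=(\det J)^{-1}\widehat{\diver}\,\hat v$, maps $P_1^3$ onto $P_1^3$, and commutes with multiplication by the affine-invariant bubble $b_T$. Hence injectivity on one reference tetrahedron implies it on every tetrahedron.

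On the reference tetrahedron $\{0\le x\le y\le z\le1\}$, we would compute the $12\times12$ matrix of $D_1$ in the basis $\lambda_ie_j$. Its columns are $\nabla(\lambda_ib_T)\cdot e_j$, expanded in a basis of $\mathcal E_1$, for example the face-bubble-times-$\lambda$ basis. We then check that this matrix is nonsingular, in the same exact-arithmetic spirit as $\det A=-6$ in \cref{lem:macro}.

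A cleaner alternative, which we would try first, is the following. Suppose $\diver(b_Tp)=0$. Pair with $\phi\in P_2$ to get
\[
\int_Tb_T\,p\cdot\nabla\phi=0 .
\]
The gradients $\nabla P_2$ give exactly the linear fields with symmetric Jacobian. The antisymmetric part would then need to be excluded by pairing with $\operatorname{curl}$-type test fields. Verifying this clean argument is the main obstacle; if it fails, we fall back on the explicit reference determinant.

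\textbf{Stability.} The estimate \eqref{eq:bubble-bound} follows by scaling. All elements of $\T_h$ are images of finitely many reference tetrahedra under $x\mapsto x_C+hQ\hat x$ with $Q$ a coordinate permutation. On each reference tetrahedron, $\hat q\mapsto b\,D_s^{-1}\hat q$ is a linear map between finite-dimensional spaces, so it is bounded from $L^2$ to $H^1$. Under the Piola map with $J=hQ$, the divergence scales by $h^{-1}$. Consequently
\[
|b_TD_s^{-1}q|_{H^1(T)}=h^{1/2}\,\bigl|\hat{\cdot}\bigr|_{H^1}\qquad\text{and}\qquad \norm q_{L^2(T)}=h^{3/2}\norm{\hat q}_{L^2}\ \text{(up to the }h^{-1}\text{ factor)},
\]
and these powers of $h$ cancel. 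Taking the maximum of the reference constants over the finitely many reference tetrahedra gives a uniform $C$.
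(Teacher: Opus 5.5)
Your range argument, dimension count ($4-1=3$ and $13-1=12$), $s=0$ injectivity and scaling step are all fine; your line-segment argument for $s=0$ is a valid alternative to the paper's face-normal argument. The gap is the one step with real content: injectivity of $D_1$. You only announce that a $12\times12$ matrix on the reference tetrahedron would be computed and found nonsingular, and you leave the pairing argument open. As you suspect, pairing with $\nabla P_2$ only shows that $p$ is $b_T$-weighted orthogonal to a $9$-dimensional subspace of $P_1(T)^3$. Moreover, the weak form $\int_T b_T\,p\cdot\nabla\phi=0$ admits only gradient test fields, so ``curl-type'' tests are not available without higher-degree $\phi$ and a further nondegeneracy check. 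The determinant route would succeed in principle, but as written the isomorphism for $s=1$ rests on a computation you have not done.

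The missing idea is to restrict $\diver(b_Tp)=0$ to the relative interior of each face $F_i=\{\lambda_i=0\}$. There $b_T=0$, and every summand of $\nabla b_T$ except $\prod_{j\ne i}\lambda_j\nabla\lambda_i$ vanishes. Hence $(p\cdot\nabla\lambda_i)\prod_{j\ne i}\lambda_j=0$, so the affine function $p\cdot\nabla\lambda_i$ vanishes on $F_i$ and equals $c_i\lambda_i$ for a constant $c_i$. Summing over $i$ and using $\sum_i\nabla\lambda_i=0$ gives $\sum_i c_i\lambda_i=0$, hence all $c_i=0$. Then $p$ is pointwise orthogonal to the three independent vectors $\nabla\lambda_1,\nabla\lambda_2,\nabla\lambda_3$, so $p=0$. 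This treats $s=0$ and $s=1$ uniformly and is essentially the paper's proof. The paper pulls back by the contravariant Piola map to the unit simplex, where the three coordinate faces force $\hat p=(ax,by,cz)$ and the face $x+y+z=1$ forces $a=b=c=0$.

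A small slip in your stability step: under the contravariant Piola map with $J=hQ$ the divergence scales by $h^{-3}$, not $h^{-1}$. The factor $h^{-1}$ belongs to the plain pullback $v=Q\,\hat v\circ F^{-1}$. This does not affect your conclusion, since $|v|_{H^1(T)}/\norm{\diver v}_{L^2(T)}$ is unchanged when $v$ is multiplied by a constant.
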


\begin{proof}
The generated target vector $b_Tp$ perfectly vanishes evaluating on $\partial T$. On an arbitrary tetrahedron edge, exactly two barycentric geometric coordinates analytically vanish. Because differentiating $b_Tp$ can structurally remove at most one of their multiplied factors, it follows that $D_sp$ naturally vanishes on every individual edge. Its overall volume integral equals precisely zero by application of the divergence theorem, guaranteeing the strict subset inclusion $D_sP_s(T)^3\subset\mathcal E_s(T)$.

The required target dimensionalities follow directly from an analysis of the barycentric Bernstein basis structure. A general cubic polynomial satisfying a vanishing condition on all geometric edges possesses exactly the four distinct monomials actively supported on exactly three barycentric variables. Because their localized volume integrals evaluate uniformly equal and remain strictly nonzero, the defined zero-integral subspace possesses a spatial dimension of $4-1=3$. Extending to polynomial degree four, there exist exactly $4\binom{3}{2}=12$ unique monomials exhibiting geometric support strictly on exactly three variables, alongside exactly one independent monomial exhibiting support spanning all four variables. Integration operates as a strictly nonzero functional mapped on this thirteen-dimensional coordinate space, logically ensuring that its operational kernel possesses exactly dimension twelve. Thus we definitively compute
\begin{equation}\label{eq:bubble-dim}
 \dim\mathcal E_0(T)=3=\dim P_0(T)^3, \qquad \dim\mathcal E_1(T)=12=\dim P_1(T)^3.
\end{equation}

It remains necessary to mathematically prove the required injectivity. Suppose we have $D_sp=0$. Evaluated strictly on the relative interior space of the specific face $F_i=\{\lambda_i=0\}$, only the localized differentiation of $\lambda_i$ can analytically survive. Therefore, we obtain the constraint
\begin{equation}
 (p\cdot\nabla\lambda_i)\prod_{j\ne i}\lambda_j=0.
\end{equation}
Hence we must have $p\cdot n_i=0$ satisfied uniformly on all four bounding faces. A fixed constant continuous vector functionally orthogonal to the four distinct non-coplanar face normals must equal the zero vector, conclusively proving injectivity for the case $s=0$.

For the subsequent case $s=1$, we let a localized mapping function $F(\widehat x)=B\widehat x+b$ accurately map the unit reference tetrahedron to the target $T$, and we structurally set $\widehat p=\det(B)B^{-1}(p\circ F)$. This formal contravariant Piola pullback operates as a strictly affine transformation and structurally preserves all vanishing normal continuous traces. On the standardized reference tetrahedron defined by $\{x,y,z\geq0:x+y+z\leq1\}$, the strict normal physical conditions mandated on $x=0$, $y=0$, and $z=0$ structurally force the form
\begin{equation}
 \widehat p(x,y,z)=(ax,by,cz).
\end{equation}
The matching continuous boundary condition required on the face $x+y+z=1$ provides the explicit geometric relation $ax+by+cz=0$ spanning that particular face. Discrete evaluation strictly at its three intersecting geometric vertices successfully yields $a=b=c=0$. Thus the mapping operator $D_s$ is securely proven to be injective, and the preceding coordinate dimension calculation conclusively proves full bijectivity.

On each individual geometric element among the six unit Freudenthal reference tetrahedra, the relation in \eqref{eq:bubble-bound} strictly follows directly from generalized finite-dimensional mathematical norm equivalence. Every physical mapped element evaluates as $T=x_0+h\widehat T$ for precisely one of these reference base elements. Implementing the coordinate scaling $x=x_0+h\widehat x$, we subsequently take the mapped field $v(x)=h\widehat v(\widehat x)$. It follows mathematically that $\diver_xv=\diver_{\widehat x}\widehat v$, while both continuous metrics $|v|_{H^1(T)}$ and $\norm q_{L^2(T)}$ uniformly acquire the identical dimensional scaling factor $h^{3/2}$. The six distinct reference geometrical types therefore securely share one universal bounded constant, completing the proof.
\end{proof}

\section{Proof of the Main Theorem}\label{sec:completion}

\begin{proof}[Proof of \cref{thm:main} for $k=4,5$]
The trivial base cases $N=1,2$ are comprehensively covered by the logic provided in \cref{rem:boundary}; therefore, we mathematically assume the generalized state $N\geq3$. Let an arbitrary pressure field $q\in Q_{h,k}$ be given. Since $q\in Q_{h,k}$, there is $w_h\in V_{h,k}$ such that $q=\diver w_h$. The divergence theorem gives $\int_\Omega q=0$, and hence $q\in L^2_0(\Omega)$. To build the global right inverse, we define the successive operational stages structurally as
\begin{equation}\label{eq:successive-def}
\begin{aligned}
 v_0&=L_h^0q, & r_0&=q-\diver v_0,\\
 v_1&=L_h^1r_0, & r_1&=r_0-\diver v_1,\\
 v_2&=L_h^2r_1, & r_2&=r_1-\diver v_2.
\end{aligned}
\end{equation}
By the application of \cref{lem:means}, the initial mapped residual $r_0$ possesses a strict zero mean evaluated on every individual tetrahedron. Subsequently, by \cref{prop:vertex}, the second residual $r_1$ functionally retains those zero means while simultaneously vanishing precisely at every broken geometric vertex. Applying \cref{prop:edge} ensures that the subsequent residual $r_2$ similarly retains the zero means and dynamically vanishes identically evaluated on every physical tetrahedron edge.

Because each residual is a difference of divergences of fields in $V_{h,k}$, every residual belongs to $Q_{h,k}$. Consequently, the vertex and edge propositions are applied on their natural image space, so all required compatibility identities hold.

On each isolated local tetrahedron, the terminal residual satisfies $r_2|_T\in\mathcal E_{k-4}(T)$. By applying the relations established in \cref{lem:bubble}, we locally choose the mapping vector $p_T=D_{k-4}^{-1}(r_2|_T)$ and explicitly define the final correction field as
\begin{equation}
 v_3|_T=b_Tp_T.
\end{equation}
Each constructed localized element field possesses a strict mathematical zero trace, meaning proper continuous spatial extension and global operational assembly seamlessly yield $v_3\in V_{h,k}$, providing the terminal closure relations
\begin{equation}\label{eq:element-close}
 \diver v_3=r_2,\qquad |v_3|_1\leq C_3\norm{r_2}.
\end{equation}

We structurally designate $c_j$ as the uniform mapping operator bound corresponding to $v_j$ evaluated precisely in terms of the specific residual input entering its sequential operational stage. We then formally put $r_{-1}=q$ and set the accumulation factor $a_j=1+\sqrt3c_j$. From the baseline divergence bound established in \eqref{eq:div-bound}, we naturally obtain the chained inequalities
\begin{equation}
 \norm{r_j}\leq a_j\norm{r_{j-1}},\qquad j=0,1,2.
\end{equation}
Consequently, the complete aggregated linear field evaluates as $R_{h,k}q=v_0+v_1+v_2+v_3$. This aggregated field is structurally linear, satisfies the exact condition $\diver R_{h,k}q=q$, and ensures the global analytical bound
\begin{equation}\label{eq:final-bound}
 |R_{h,k}q|_1 \leq\bigl(c_0+c_1a_0+c_2a_1a_0+c_3a_2a_1a_0\bigr)\norm q.
\end{equation}
Every computed numerical multiplicative factor mathematically emerges strictly independent of the mesh scaling parameters $h$ and $N$. By taking the explicit assignment $v=R_{h,k}q$ securely inside the overarching defining supremum, we successfully obtain the relation
\begin{equation}
 \frac{(\diver v,q)}{|v|_1\norm q} =\frac{\norm q^2}{|R_{h,k}q|_1\norm q}\geq C_k^{-1}.
\end{equation}
This mathematically proves the assertions stated in \cref{thm:main} explicitly for the target polynomial degrees $k=4,5$. Evaluated together directly alongside Zhang's foundational Theorem 3.1 established securely for $k\geq6$, it comprehensively proves and yields the full stated bounded range.
\end{proof}

\section{Exact Verification and Reproducibility}\label{sec:certificates}

The mathematical constructions utilized throughout the preceding proofs are defined explicitly in our established lemmas. For independent verification and reproducibility, the accompanying computational framework evaluates these theoretical formulas using exact arithmetic. These automated programs systematically check the stated support and trace properties, subsequently recording the resulting finite-dimensional operational maps. They serve strictly as verification and reproducibility tools. The vertex and edge star surjectivity statements, alongside the quartic macro lift, are definitively established by the analytic constructions detailed previously in the text. 

To provide a comprehensive verification suite, we implemented several specific algorithmic scripts. The program \texttt{audit\_analytic\_vertex\_lift.py} processes the six canonical stars and all 783 vertices defined for grids up to $N=6$. This script mathematically verifies the edge-jet identities, zero off-target vertex values, face transfers, dual-tree connectivity, and the exact cancellation of all element means. Similarly, \texttt{audit\_analytic\_edge\_lift.py} processes the seven canonical edge stars and 3969 geometric edges to rigorously confirm the endpoint and middle face-bubble identities in degrees four and five. This specific evaluation includes the two-tetrahedron boundary construction and all transported spatial placements. Furthermore, the script \texttt{macro\_mean\_certificate.py} evaluates the twelve-tetrahedron $C^0P_4$ space on the reference domain $\widehat P$ to ensure that the eleven generated fields maintain a strictly zero divergence trace on every geometric edge, confirming simultaneously that their scaled mean matrix yields a determinant of exactly $-6$. The auxiliary file \texttt{p5\_two\_tet\_transfer.py} verifies the conformity, exterior zero trace, and zero divergence trace for the two quintic polynomials, confirming their exact $H^1$ seminorm evaluates to $504/11$. Finally, \texttt{low\_degree\_bubble\_isomorphism.py} confirms the kernel dimensions for the low-degree bubble maps, while the companion scripts \texttt{vertex\_star\_certificate.py} and \texttt{edge\_star\_certificate.py} securely validate the exact image dimensions, target annihilators, and rational nodal realizations across all designated configurations.

\subsection{Algebraic Certificates and Data Records}

For a concrete coordinate representation of the vertex compatibility conditions, we order the cube lower corners lexicographically and process the six coordinate permutations within each individual cube in lexicographical order. On each canonical vertex star, we order the incident tetrahedra accordingly and denote the resulting target values as $d_0, d_1, \ldots$. The algebraic annihilator of the defined edge-jet image is mathematically represented by the following set of compatibility relations:
\begin{equation}\label{eq:vertex-relations}
\begin{array}{c@{\quad}l}
|\T(a)| & \text{compatibility relations}\\[2pt]
2 & d_0=0,\ d_1=0,\\
4 & d_0=d_1=d_2=d_3=0,\\
6 & d_0-d_2=0,\ d_1-d_4=0,\ d_3-d_5=0,\\
8 & d_0-d_1-d_2+d_4=0,\ d_3-d_6=0,\ d_5-d_7=0,\\
12 & d_0-d_1=0,\ d_2-d_3-d_7+d_{10}=0,\\
   & d_4-d_5-d_6+d_8=0,\ d_9-d_{11}=0,\\
24 & d_0-d_1-d_{12}+d_{13}=0,\quad d_2-d_3-d_8+d_9=0,\\
   & d_4-d_5-d_6+d_7=0,\quad d_{10}-d_{11}-d_{21}+d_{23}=0,\\
   & d_{14}-d_{15}-d_{19}+d_{22}=0,\quad d_{16}-d_{17}-d_{18}+d_{20}=0.
\end{array}
\end{equation}
Each mathematically displayed row analytically annihilates the mapping from \eqref{eq:vertex-jet-map} strictly by direct substitution. 

To systematically verify the algebraic independence and functional completeness of these derived relations, the generated exact records yield specific dimensional data. We define $O_a$ as the matrix stacking the element-mean and off-target vertex rows, and we define $\mathsf D_a$ as the explicit target vertex map. The computed rank properties are summarized in Table~\ref{tab:vertex-ranks}.

\begin{table}[ht]
\centering
\small
\caption{Rank properties and dimensional verification for the vertex compatibility maps.}\label{tab:vertex-ranks}
\begin{tabular}{@{}rrrrrr@{}}
\toprule
$|\T(a)|$ & columns & rows of $O_a$ & $\operatorname{rank}O_a$ &
$\operatorname{rank}[O_a;\mathsf D_a]$ & difference\\
\midrule
2&3&8&1&1&0\\
4&9&16&3&3&0\\
6&24&24&8&11&3\\
8&39&32&13&18&5\\
12&69&48&23&31&8\\
24&195&96&65&83&18\\
\bottomrule
\end{tabular}
\end{table}

The computed rank differences precisely agree with the mathematical dimension $\dim\operatorname{im}A_a$. The generated 99 records additionally store the tetrahedron ordering, physical boundary planes, rational target annihilators, nonzero modular minors, and rational nodal fields satisfying the three foundational identities. The closed placement formulas are computationally checked through grid sizes up to $N=6$.

To construct the corresponding edge star records, we let $V$ and $T$ represent $\gamma_e$ and $\tau_e$ evaluated on $X_{e,k}$, and we let $O$ and $\widetilde T$ represent $\sigma_e$ and $\tau_e$ evaluated on $Y_{e,k}$. For every individual record, exact rational elimination verifies the dimensional constraints
\begin{equation}\label{eq:source-lift-ranks}
\begin{aligned}
 \dim\mathcal D_{e,k} &=\operatorname{rank}_{\Q}\begin{bmatrix}V\\T\end{bmatrix} -\operatorname{rank}_{\Q}V,\\
 \dim\tau_e(\ker\sigma_e\cap Y_{e,k}) &=\operatorname{rank}_{\Q}\begin{bmatrix}O\\\widetilde T\end{bmatrix} -\operatorname{rank}_{\Q}O.
\end{aligned}
\end{equation}
The two specific target maps inherently possess identical rational annihilators, and their strictly common dimensions consistently match those tabulated previously. These verified equalities provide an exact finite-dimensional algorithmic check for the entire face-bubble construction. 

The underlying data rigorously utilize a structured schema. Each record features a stable string identifier and explicitly stores the incidence type, concrete geometric edge, localized grid size, polynomial degree, star and patch sizes, coordinate span, physical plane flags, hash signatures, target annihilator matrix, and a nonzero modular minor. The accompanying summary file efficiently indexes the 180 conservative coverage classes and strictly maps every generated record directly to its assigned class, making the finite-to-global correspondence completely explicit. Furthermore, all algebraic eliminations are computed strictly over the rational field $\Q$. The deployed modular witnesses utilize the prime integer 1000003, while the master generator autonomously checks mathematical primality, nonvanishing operational denominators, record counts, identifier uniqueness, matrix ranks, linear relations, and matrix minors. 

\subsection{Computational Reproducibility}

For the specific quartic macro analysis, the Bernstein polynomial program systematically assembles the 192 broken edge rows alongside the twelve mean rows operating on 189 vector coefficients. Direct algebraic substitution verifies the eleven sparse mathematical fields and firmly validates the derived integer determinant. Serving as a robust whole-space consistency check, the identical computational assembly strictly yields
\begin{equation}\label{eq:macro-ranks}
 \operatorname{rank}_{\Q}E=122,\qquad \operatorname{rank}_{\Q}\begin{bmatrix}E\\M\end{bmatrix}=133.
\end{equation}
An alternative equispaced nodal Lagrange assembly independently gives the same algebraic ranks. Thus, independent Bernstein and equispaced nodal--Lagrange assemblies reproduce both the explicit mathematical construction and the eleven-dimensional target mean image. The corresponding quintic program symbolically differentiates and integrates the fields. It independently generates the two coordinate chain and face pair types utilized during the proof, strictly verifying the unique distinguished spatial edge.

To guarantee complete scientific reproducibility, the computational framework requires Python 3.10 or a newer release. The provided configuration file permanently pins all software dependency versions and cryptographic package hashes. By executing the provided master generation script from the submission directory using a standard virtual environment, researchers can autonomously run every exact mathematical verification and reliably regenerate the complete set of record files. On a standard reference single-core processing system, the comprehensive verification run requires approximately twenty minutes. This execution time is allocated almost entirely to processing the 3996 edge-star degree records. Individual mathematical records and localized analytic identities can also be inspected independently through the provided validation scripts. 

Because the generated data structures strategically omit transient timestamps and hardware runtimes, executing identical source code mathematically guarantees strictly byte-identical output files. All underlying matrix ranks, system determinants, continuous spatial integrals, and continuous trace identities are computed exclusively using exact rational arithmetic, completely avoiding floating-point numerical approximations or random algorithmic choices. Finally, the provided cryptographic manifest conclusively records the secure hash sums for all shipped software sources, computed data sets, and manuscript artifacts, ensuring total operational transparency.

\section*{Author Contributions}

Hanbing Liang selected the research problem, designed and managed the
agentic research workflow, organized independent and adversarial verification,
managed the computational certificates and proof versions, and participated
in manuscript preparation and revision.

Fujun Liu performed the human mathematical review and validation of the
proofs, supervised the work, and critically reviewed and revised the manuscript.

Both authors approved the submitted version of the manuscript.

\section*{Use of Artificial Intelligence}

OpenAI Codex and DeepSeek Harness (DSH) were used substantively during the
research process. Codex was used to generate the core proof, explicit
mathematical constructions, exact-verification code, and portions of the
manuscript draft. Independent Codex runs and runs conducted through DSH were
used for adversarial review, cross-checking intermediate arguments, and
identifying potential errors.

The human authors reviewed and revised the final manuscript and take
responsibility for the submitted work.

\section*{Code and Verification Materials Availability}

The exact-verification code, deterministic computational certificates,
reproducibility scripts, dependency information, and associated verification
data for this work are publicly available at
\url{https://github.com/ToughClimb/freudenthal-sv}.

The repository provides instructions for reproducing the exact-arithmetic
checks reported in the manuscript. The software and original accompanying
verification materials are released under the GPL-3.0-or-later license.

\section*{Funding}
This work was supported by the Science and Technology Development Project of Jilin Province (Grant No. 20250102032JC).

\bibliographystyle{amsplain}
\providecommand{\bysame}{\leavevmode\hbox to3em{\hrulefill}\thinspace}
\providecommand{\MR}{\relax\ifhmode\unskip\space\fi MR }
\providecommand{\MRhref}[2]{%
  \href{http://www.ams.org/mathscinet-getitem?mr=#1}{#2}}
\providecommand{\href}[2]{#2}

\end{document}